\numberwithin{equation}{section}
\DeclareMathOperator{\MCG}{MCG}
\DeclareMathOperator{\Out}{Out}
\DeclareMathOperator{\Aut}{Aut}
\newcommand{\Pants}{\mathcal{P}} 
\newcommand{\FS}{\mathcal{FS}} 
\newcommand{\Curve}{\mathcal{C}} 
\theoremstyle{plain}
\newtheorem{theorem}{Theorem}[section]
\newtheorem{proposition}[theorem]{Proposition}
\newtheorem{lemma}[theorem]{Lemma}
\newtheorem{corollary}[theorem]{Corollary}
\theoremstyle{definition}
\newtheorem{definition}[theorem]{Definition}
\theoremstyle{remark}
\newtheorem{remark}[theorem]{Remark}
\newtheorem*{remark*}{Remark}
\begin{document}

\title{The pants graph of a free group}
	
\author{Donggyun Seo}
\address{Institute of Mathematical Sciences \\ Chungnam National University \\ Daejeon, South Korea}
\email{seodonggyun@cnu.ac.kr \\ seodonggyun7@gmail.com}
\urladdr{https://www.seodonggyun.com}

\keywords{rose, free group, outer automorphism group of free group} 
\subjclass{primary: 57M07, 57M15, 51F30, secondary: 05C12, 53C23}

\begin{abstract}
    We introduce the concept of a pants decomposition for a finitely generated free group and construct the corresponding pants graph. 
    A pants decomposition of a free group leads to the formation of a simplicial graph, referred to as the pants graph of a free group, consisting of all possible pants decompositions.
    The natural isometric action of the outer automorphism group of the free group on the pants graph induces a coarsely surjective orbit map.
    Additionally, we construct a coarsely Lipschitz map from the pants graph to the free splitting complex. 
    These results imply that the pants graph of a free group is both connected and unbounded.
\end{abstract}

\maketitle




\section{Introduction}

For a free group \( F_n \) of rank \( n \), the dynamical behavior of its outer automorphism group \( \Out(F_n) \) has been extensively studied through various isometric actions. One particularly well-known object in this context is the Culler--Vogtmann Outer space, which is equipped with a Lipschitz metric. However, the Lipschitz metric is not symmetric, and its symmetrization, while intriguing, is not a geodesic metric, as shown by Francaviglia and Martino \cite{MR2839451}.

In a parallel development, the Thurston metric on Teichm\"uller space, closely related to the Lipschitz metric, has been studied extensively. The Thurston metric diverges from the Teichm\"uller metric in the thin parts, as demonstrated by Choi and Rafi \cite{MR2377122}, yet remains uniformly bounded in a thick part.



Over the last decade, researchers have developed several simplicial graphs associated with \( \Out(F_n) \), such as the free splitting graphs \cite{MR1314940}, the cyclic splitting graphs \cite{MR3275303}, the free factor graphs \cite{MR1660045}, the intersection graphs \cite{MR2496058}, and others. Coarsely Lipschitz maps from Outer space to these complexes have been constructed, providing insights into the geometric structure of \( \Out(F_n) \) \cite{MR3231221, MR3653318}. A map between two metric spaces is said to be coarsely Lipschitz if it satisfies a specific bounded distortion condition between distances in the spaces.


\begin{definition}
    Let \( \varphi: \mathcal{X} \to \mathcal{Y} \) be a map between two metric spaces \( (\mathcal{X}, d_\mathcal{X}) \) and \( (\mathcal{Y}, d_\mathcal{Y}) \).
    \begin{enumerate}
        \item For $A \geq 1$ and $B \geq 0$, we say \( \varphi \) is \emph{\( (A, B) \)--coarsely Lipschitz} if
        \[
            d_\mathcal{Y}(\varphi(x), \varphi(y)) \leq A d_\mathcal{X}(x, y) + B
        \]
        for every $x, y \in \mathcal{X}$.
        \item For \( C \geq 0 \), we say \( \varphi \) is \emph{\( C \)--coarsely surjective} if \( \mathcal{Y} \) is the \( C \)--neighborhood of the image \( \varphi(\mathcal{X}) \).
    \end{enumerate}
    We write \( \mathcal{X} \succ \mathcal{Y} \) if there exists an \( C \)--coarsely surjective and \( (A, B) \)--coarsely Lipschitz map from \( \mathcal{X} \) to \( \mathcal{Y} \) for some \( A \), \( B \) and \( C \).
\end{definition}

For orientable surfaces of finite type, Brock \cite{MR1969203} demonstrated that the pants graph representing the simplicial structure of all pants decompositions captures the large-scale geometry of the Weil--Petersson metric on Teichm\"uller space. This relationship highlights how combinatorial properties of the pants graph can inform geometric structures.

In this paper, we focus on pants decompositions of surfaces marked by a free group, which may be orientable or non-orientable. We introduce four elementary moves that modify these decompositions and define a simplicial graph, denoted by \( \Pants_n \), as the pants graph of a free group. Additionally, we study the induced subgraph \( \Pants^+_n \), which consists of pants decompositions of orientable surfaces marked by a free group.

Our main result demonstrates the relationship between the pants graph and well-known \( \Out(F_n) \)-complexes, as shown in the following theorem:

\begin{theorem} \label{thm:main}
    For each \( n \geq 3 \), the following sequence holds:
    \[
        \mathcal{K}_n \succ \Pants^+_n \succ \Pants_n \succ \FS_n
    \]
    where \( \mathcal{K}_n \) is the spine of Outer space and \( \FS_n \) is the free splitting complex.
    In addition, $\Pants_2^+$ is quasi-isometric to the Farey graph.
\end{theorem}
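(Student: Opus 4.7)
The plan is to establish the four assertions of Theorem~\ref{thm:main} in sequence, from the most transparent to the most delicate. The inclusion $\Pants_n^+ \hookrightarrow \Pants_n$ is a map of vertex sets of an induced subgraph, hence $1$-Lipschitz. For coarse surjectivity one must show that every pants decomposition lies uniformly close to an orientable one; I would argue this by a local surgery procedure that replaces each non-orientable pair of pants by orientable pieces via a uniformly bounded sequence of the four elementary moves, and then induct on the number of non-orientable pieces to conclude that the constant depends only on $n$.

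For $\Pants_n \succ \FS_n$, the natural candidate sends a pants decomposition to its dual graph-of-groups, viewed as a vertex of $\FS_n$. Each of the four elementary pants moves alters the dual tree within a bounded subgraph (at most the two pants adjacent to the affected curve), so the resulting map is coarsely Lipschitz with constants depending only on $n$. Coarse surjectivity is obtained by refining any one-edge free splitting to a full pants decomposition: one chooses a marked surface realising the two free factors from the splitting and completes to a pants decomposition of the total surface.

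For $\mathcal{K}_n \succ \Pants_n^+$, a vertex of the spine of Outer space is a marked rank-$n$ graph $\Gamma$; thickening $\Gamma$ to an orientable surface $\Sigma(\Gamma)$ deformation-retracting onto it produces a canonical pants decomposition, for instance by taking the boundaries of regular neighborhoods of the vertices of $\Gamma$. A Whitehead move between adjacent vertices of $\mathcal{K}_n$ modifies $\Gamma$ only in a bounded-complexity subgraph, so after thickening it translates into a uniformly bounded sequence of elementary pants moves on $\Sigma(\Gamma)$. Coarse surjectivity is direct: every marked orientable pants decomposition arises from thickening the dual $1$-skeleton of some marked graph.

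Finally, for $n = 2$, the only orientable surface with fundamental group free of rank $2$ admitting a nontrivial pants decomposition is the once-punctured torus. Its pants decompositions are single essential simple closed curves, and the only applicable elementary move degenerates to the standard Farey flip, whence $\Pants_2^+$ coincides with the Farey graph up to bounded distance. The main obstacle throughout will be Step~3: the canonical choice of thickening and of pants must be sufficiently rigid that each Whitehead move on graphs corresponds to a uniformly bounded number of elementary pants moves, independent of $\Gamma$. Pinning down the thickening up to a bounded ambiguity, and then explicitly enumerating the pants moves realising each Whitehead move, is the technical heart of the argument.
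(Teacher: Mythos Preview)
Your outline for $\Pants_n^+ \succ \Pants_n$ matches the paper's: both amount to showing that the quotient $\Pants_n/\Out(F_n)$ has finite diameter, which your surgery induction would establish. Your route to $\mathcal{K}_n \succ \Pants_n^+$ via thickening marked graphs is genuinely different from the paper's, which instead proves that the orbit map $\Out(F_n)\to\Pants_n^+$ is coarsely Lipschitz and coarsely surjective by exhibiting, for each Nielsen generator (transvection or inversion), a bounded subgraph of $\Pants_n^+$ that it preserves. Your geometric approach could in principle work, but the ambiguity in the thickening (cyclic orders at vertices, non-trivalent vertices) is real, whereas the orbit-map argument sidesteps it entirely.

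There is, however, a genuine error in your map $\Pants_n\to\FS_n$. The dual graph-of-groups of a pants decomposition has \emph{infinite cyclic} edge groups --- each edge is stabilised by the pants curve it crosses --- so the associated Bass--Serre tree is a cyclic splitting, not a free splitting, and does not define a vertex of $\FS_n$ at all. The paper's map is quite different: to $[P_X,h_X,X]$ it assigns the free splitting $T_\alpha$ dual to an essential \emph{arc} $\alpha\subset X$ disjoint from $P_X$ (collapsing an arc with both endpoints on $\partial X$ yields a genuine free splitting by van Kampen). One then checks this is coarsely well-defined (any two such arcs lie in a common pair of pants) and that each elementary move changes $T_\alpha$ by bounded distance in $\FS_n$.

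Your treatment of $n=2$ also misreads the definition. The pair of pants has fundamental group $F_2$ and contributes infinitely many vertices to $\Pants_2^+$ (one per marking class, each with the empty pants decomposition); moreover the edges of $\Pants_2^+$ run between once-holed-torus vertices and pair-of-pants vertices, not directly via Farey flips. The paper obtains the quasi-isometry by mapping $\Pants_2^+$ into the $1$-skeleton of the barycentric subdivision of the Farey complex, sending each pair-of-pants vertex to the barycentre of the triangle spanned by its three boundary curves.
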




Notice \( \mathcal{K}_n \) is connected \cite[Proposition 3.1.2]{MR830040} and \( \Pants_n \) and \( \Pants_n^+\) are endowed with the combinatorial metrics.
This implies both \( \Pants^+_n \) and \( \Pants_n \) are connected.
Indeed, given any two vertices $x, y \in \Pants_n^+$, if $\varphi \colon \mathcal{K}_n \to \Pants_n^+$ is a $C$--coarsely surjective $(A, B)$--coarsely Lipschitz map, there exist $z_x, z_y \in \mathcal{K}_n$ such that $d_\Pants(x, \varphi(z_x)) \leq C$ and $d_\Pants(y, \varphi(z_y)) \leq C$. We obtain
\begin{align*}
    d_\Pants(x, y) &\leq d_\Pants(x, \varphi(z_x)) + d_\Pants(\varphi(z_x), \varphi(z_y)) + d_\Pants(\varphi(z_y), y) \\
    &\leq A\, d_\mathcal{K}(z_x, z_y) + B + 2C \;<\; \infty,
\end{align*}
which guarantees the existence of a path joining $x$ and $y$.
Meanwhile, the free splitting complex \( \FS_n \) is unbounded, guaranteed by the existence of loxodromic elements \cite{MR3073931, MR4009387}. It follows that \( \Pants^+_n \) and \( \Pants_n \) are also unbounded. This leads to the following corollary:

\begin{corollary} \label{cor:conn_ubdd}
    For each \( n \geq 3 \), both \( \Pants^+_n \) and \( \Pants_n \) are connected and unbounded.
\end{corollary}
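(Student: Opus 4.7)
The plan is to derive both conclusions as purely formal consequences of Theorem~\ref{thm:main}, together with the cited facts that the spine $\mathcal{K}_n$ is connected and that the free splitting complex $\FS_n$ is unbounded. The guiding principle is that the relation $\succ$ transports connectedness forward and unboundedness backward, and this transport is clean precisely because the graphs $\Pants^+_n$, $\Pants_n$, and $\FS_n$ carry combinatorial metrics.

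First I would verify the following general principle (either as a short lemma or inline): if $\mathcal{X} \succ \mathcal{Y}$ via an $(A,B)$--coarsely Lipschitz, $C$--coarsely surjective map $\varphi$, if $\mathcal{X}$ is connected, and if $\mathcal{Y}$ is a simplicial graph with the combinatorial metric, then $\mathcal{Y}$ is connected. The crucial observation is that in such a $\mathcal{Y}$ the distance between vertices lying in different connected components is $+\infty$. Since $\mathcal{X}$ is path--connected, for any $x, x' \in \mathcal{X}$ the distance $d_\mathcal{X}(x,x')$ is finite, and the inequality $d_\mathcal{Y}(\varphi(x), \varphi(x')) \leq A\, d_\mathcal{X}(x,x') + B$ forces $\varphi(\mathcal{X})$ into a single component of $\mathcal{Y}$. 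The finite constant $C$ from coarse surjectivity then absorbs every other vertex of $\mathcal{Y}$ into that component. Applying this principle along $\mathcal{K}_n \succ \Pants^+_n$ and $\Pants^+_n \succ \Pants_n$, and invoking \cite[Proposition 3.1.2]{MR830040} for the connectedness of $\mathcal{K}_n$, I would conclude connectedness of both $\Pants^+_n$ and $\Pants_n$.

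Next I would treat unboundedness contrapositively. If $\mathcal{X} \succ \mathcal{Y}$ and $\mathcal{X}$ has diameter $D < \infty$, then the coarse Lipschitz bound gives $\operatorname{diam}(\varphi(\mathcal{X})) \leq AD + B$, and coarse surjectivity inflates this only by a further $2C$, so $\mathcal{Y}$ is bounded as well. Contrapositively, unboundedness of $\mathcal{Y}$ forces unboundedness of $\mathcal{X}$. The existence of loxodromic elements for the action of $\Out(F_n)$ on $\FS_n$, from \cite{MR3073931, MR4009387}, makes $\FS_n$ unbounded, so propagating along $\Pants^+_n \succ \Pants_n \succ \FS_n$ yields unboundedness of $\Pants_n$ and then of $\Pants^+_n$.

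I do not anticipate any genuine obstacle in this corollary; the entire substance is concentrated in Theorem~\ref{thm:main}, while the statement above is a routine unwinding of the definition of $\succ$ combined with the combinatorial nature of the graphs on the right and the previously established unboundedness of the free splitting complex.
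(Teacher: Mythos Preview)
Your proposal is correct and matches the paper's approach exactly: the paper derives the corollary in the paragraph immediately preceding it, using precisely the transport of connectedness forward along $\mathcal{K}_n \succ \Pants_n^+ \succ \Pants_n$ (via \cite[Proposition~3.1.2]{MR830040} and the combinatorial metrics) and of unboundedness backward along $\Pants_n^+ \succ \Pants_n \succ \FS_n$ (via the loxodromic elements of \cite{MR3073931, MR4009387}). Your version simply spells out the formal lemmas behind these transports more explicitly than the paper does.
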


\subsection{Questions}

Our study of pants graphs of free groups is only in its early stages, and many aspects remain unknown. In this section we collect several remarks and questions concerning pants graphs of free groups, presented in comparison with the existing results on pants graphs of surfaces.

The pants graph of a compact surface becomes simply connected once polygons corresponding to finitely many relations are attached, as shown by Hatcher and Thurston \cite{MR579573} in the orientable case and by Papadopoulos and Penner \cite{MR3460762} in the nonorientable case. This property allows the pants graph of a surface to serve as a deformation space for the mapping class group. By analogy, we expect that the pants graph of a free group may play a similar role; in particular, we conjecture that it becomes simply connected after attaching polygons arising from finitely many relations.

For any homotopy equivalence between a rose and a surface, the induced map from the pants graph of the surface to the pants graph of the free group is coarsely Lipschitz. We expect, however, that this map is in fact a quasi--isometric embedding. Regardless of the outcome, resolving this question will clarify the relationship between pants graphs of surfaces and pants graphs of free groups.

In connection with Corollary \ref{cor:conn_ubdd}, we note the following. In this article, we are unable to establish any meaningful relationship between the Culler--Vogtmann Outer space and the pants graph. By analogy with the well-known quasi-isometry between the Weil--Petersson metric and the pants graph of a surface, one might naturally expect a similar connection between Outer space and the pants graph. However, work by Aougab, Clay, and Rieck \cite{MR4544143} shows that there is no analogue of a Weil--Petersson-type metric on Outer space. In particular, the metric completion of Outer space with respect to any entropy or pressure metric necessarily admits a global fixed point.

In contrast, \( \Pants_n \) cannot admit a global fixed point, since it contains outer automorphisms with unbounded orbits. This follows from the fact that the coarsely Lipschitz maps we constructed are ``coarsely \( \Out(F_n) \)--equivariant'' and that there exists an element acting loxodromically on \( \FS_n \). Thus, while any metric completion of Outer space is forced into a fixed-point phenomenon, the pants graph resists such a structure. This suggests a fundamental distinction between the two settings, though at present the precise reason for this difference remains unclear.

Finally, we turn to the question of determining the automorphism group of the pants graph of a free group. By a result of Margalit \cite{MR2040283}, the automorphism group of the pants graph of an orientable surface is virtually isomorphic to the extended mapping class group. More recently, Stukow and Szepietowski \cite{2025arXiv250712613S} showed that the same holds for nonorientable surfaces, where the automorphism group of the pants graph is virtually isomorphic to the mapping class group. This naturally leads to the question of whether the automorphism group of the pants graph of a free group is virtually isomorphic to the outer automorphism group \(\Out(F_n)\).

\subsection{Acknowledgement}
I would like to express my deep gratitude to Sang-hyun Kim, my PhD advisor, for his invaluable discussions and unwavering support throughout this project. His contributions were essential to the completion of this work.
This project was initiated at the conference ``1,2,3: Curves, Surfaces, and 3-Manifolds,'' held in 2023 at Technion, Israel. The minicourses by Mladen Bestvina and Camille Horbez provided the primary motivation for this work. I am especially grateful for the insightful comments from Mladen Bestvina, Karen Vogtmann, and Camille Horbez, which greatly influenced the direction of this research.
I would also like to thank Inhyeok Choi, Juhun Baik, and Sanghoon Kwak for their careful attention to this project and for providing valuable feedback.
This work was supported by the National Research Foundation of Korea(NRF) grant funded by the Korea government(MSIT) 2021R1C1C2005938.

\section{The Orientable Pants Graph for the Rank-Two Free Group}



Before introducing the pants graph of a free group of arbitrary rank, we first examine the \emph{orientable pants graph} of the free group \( F_2 \) of rank two. Fix a rose $R_2$ with two petals, and identify \( F_2 \) with some fundamental group of \( R_2 \). A compact orientable surface \(X\) is homotopy equivalent to \(R_2\) if and only if \(X\) is either a pair of pants (also called a \emph{trouser}) or a once-holed torus.  
Originally, a \emph{pants decomposition} of a surface is defined as a maximal collection of homotopically non-trivial, pairwise disjoint, non-peripheral essential simple closed curves.

The pants graph of a once-punctured torus coincides with its curve graph, known as the \emph{Farey graph}, which is illustrated on the left side of Figure~\ref{fig:Farey}.  
In contrast, the pants graph of a pair of pants is trivial--it consists of a single vertex, represented by the empty set, since there are no non-peripheral essential simple closed curves on it.

For an orientable surface \(X\), a \emph{marking} is defined as a homotopy equivalence \(h_X: R_2 \to X\), and the pair \((h_X, X)\) is called a \emph{marked surface}.  
A pants decomposition of a surface can thus be redefined as a maximal collection of homotopically disjoint essential simple closed curves.  
In the case of a marked surface, the decomposition is denoted by the triple \((P_X, h_X, X)\), which we refer to as a \emph{pants decomposition of \(R_2\)}.

Two marked surfaces \((h_X, X)\) and \((h_Y, Y)\) are called \emph{marking-equivalent} if there exists a homeomorphism \(f: X \to Y\) such that \(h_Y\) is homotopic to \(f \circ h_X\).  
In this case, the homeomorphism \(f\) is referred to as a \emph{marking equivalence}.  
Two pants decompositions are said to be \emph{equivalent} if there exists a marking equivalence between them, such that one decomposition is the image of the other under this equivalence.


We observe that all marked once-holed tori are marking-equivalent.
Let \((h_X, X)\) and \((h_Y, Y)\) be two marked once-holed tori.
Note that there exists a deformation retract \(r_X : X \to R_2\) which serves as a homotopy inverse to \(h_X\).
For any simple closed curve \(\gamma \subset X\), the image \((h_Y \circ r_X)(\gamma)\) is homotopic to a simple closed curve in \(Y\).
Hence, the map \(h_Y \circ r_X\) induces an isometry from the Farey graph of \(X\) to that of \(Y\).
By Luo \cite{MR1722024} and Margalit \cite{MR2040283}, this in turn yields a homeomorphism \(f : X \to Y\).
Therefore, we conclude 
\[
    f \circ h_X \simeq h_Y \circ r_X \circ h_X \simeq h_Y \circ \mathrm{id}_{R_2} \simeq h_Y,
\] 
which establishes the claimed equivalence.

We now define a simplicial graph, denoted \(\Pants_2^+\), as follows.
The vertices of \(\Pants_2^+\) are equivalence classes of pants decompositions of \(R_2\).
An edge between two vertices \(P\) and \(Q\) is defined by the following conditions:
\begin{enumerate}
    \item \(P\) corresponds to a pants decomposition \((P_X, h_X, X)\), where \(X\) is a once-holed torus.
    \item \(Q\) corresponds to a trivial decomposition \((\emptyset, h_Y, Y)\), where \(Y\) is a pair of pants.
    \item There exists a map \(c: S^1 \to R_2\) such that \(h_X \circ c\) is homotopic to the simple closed curve \(P_X\) and \(h_Y \circ c\) is homotopic to a boundary component of \(Y\).
\end{enumerate}
We call \(\Pants_2^+\) the \emph{orientable pants graph of \(F_2\)}.



For the consistency of the definition of pants graphs in the later section, we introduce an equivalent description of the edges of \(\Pants_2^+\).
This description is given by a homotopy connecting two pants decompositions.
Given a pants decomposition \((P_X, h_X, X)\) where \(X\) is a once-holed torus, we modify \(X\) by a homotopy to obtain a pair of pants as follows.
More precisely, retract \(X\) onto a figure-eight while preserving the simple closed curve \(P_X\) as a circle, and then construct a pair of pants \((\emptyset, h_Y, Y)\) in which this figure-eight is embedded.
We then join the vertices represented by \((P_X, h_X, X)\) and \((\emptyset, h_Y, Y)\) by an edge.

If a vertex of $\Pants_2^+$ corresponds to a simple closed curve on a once-holed torus, then it is adjacent to infinitely many vertices in the pants graph.
On the other hand, if a vertex corresponds to a pair of pants, it is adjacent to exactly three vertices, i.e., finitely many. 
These three vertices are adjacent to each other in the Farey graph because any two of them form a free basis of the fundamental group of the once-holed torus.

\begin{figure}
    \centering
    \begin{tikzcd}
        \includegraphics[]{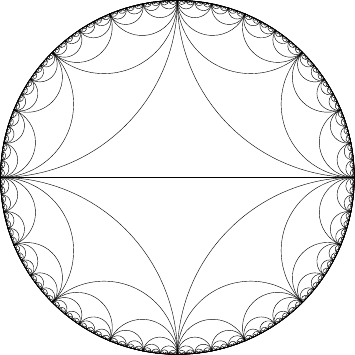}
        &
        \includegraphics[]{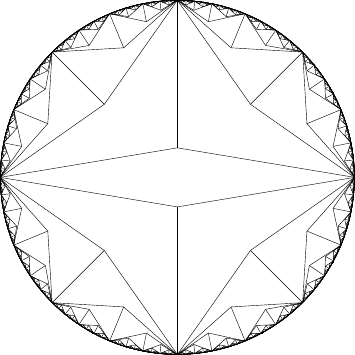}
    \end{tikzcd}
    \caption{the Farey graph and the orientable pants graph of $F_2$}
    \label{fig:Farey}
\end{figure}

\begin{theorem} \label{thm:rank_2_hyperbolicity}
    $\Pants_2^+$ is quasi-isometric to the Farey graph.
\end{theorem}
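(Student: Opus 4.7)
The plan is to exhibit $\Pants_2^+$ combinatorially as the Farey graph with an extra ``face'' vertex inserted in each $2$-simplex, and then verify directly that such an insertion changes distances by at most a multiplicative factor of $2$.

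First I would split the vertex set of $\Pants_2^+$ into two types: type (a), pants decompositions of a once-holed torus, and type (b), the trivial decomposition on a pair of pants. As the excerpt establishes, all marked once-holed tori lie in a single marking-equivalence class, so type (a) vertices are naturally in bijection with the vertex set of the Farey graph $\mathcal{F}$. For type (b), a marked pair of pants $(h_Y,Y)$ records the unordered triple of conjugacy classes in $F_2$ represented by its three boundary components; this triple has the form $\{[\alpha],[\beta],[\alpha\beta^{\pm 1}]\}$ for some free basis of $F_2$, i.e.\ precisely the vertex set of a $2$-simplex of $\mathcal{F}$. Conversely, every Farey triangle comes from some basis and therefore from a marked pants, and two such pants are marking-equivalent iff their boundary triples agree as unordered sets. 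This identifies type (b) vertices with the set of triangles of $\mathcal{F}$. By the adjacency rule in the definition of $\Pants_2^+$, a type (a) vertex $v$ and a type (b) vertex $T$ are adjacent iff $v$ is a corner of $T$, and there are no further edges. Thus $\Pants_2^+$ is the bipartite vertex--face incidence graph of $\mathcal{F}$.

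Next I would define $\psi\colon \mathcal{F}\to \Pants_2^+$ to be the inclusion of vertex sets $V(\mathcal{F})\hookrightarrow V(\Pants_2^+)$ and prove it is a quasi-isometry by checking three estimates. For the upper bound, any Farey edge $uv$ lies in some triangle $T$, so the path $u\,T\,v$ has length $2$ in $\Pants_2^+$; hence $d_{\Pants_2^+}(\psi u,\psi v)\le 2\,d_{\mathcal{F}}(u,v)$. For the lower bound, any $\Pants_2^+$-path between type (a) vertices is of even length and alternates types, producing a sequence $u=a_0,\,b_1,\,a_2,\,b_3,\,\dots,\,a_{2k}=v$; each consecutive pair $a_{2i},a_{2i+2}$ forms two corners of the triangle $b_{2i+1}$, hence is Farey-adjacent (or equal), so the subsequence of type (a) vertices is a Farey walk from $u$ to $v$ and gives $d_{\mathcal{F}}(u,v)\le \tfrac{1}{2}\,d_{\Pants_2^+}(\psi u,\psi v)$. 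Finally, every type (b) vertex is adjacent to three type (a) vertices, so $\psi$ is $1$--coarsely surjective.

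Combining these three estimates yields a $(2,2)$-quasi-isometry, proving the theorem. The only nontrivial ingredient is the identification of type (b) vertices with Farey triangles, which reduces to the fact that a marking of a pair of pants is determined up to marking-equivalence by the unordered triple of its boundary conjugacy classes; this follows from the classification of compact surfaces and the standard correspondence between free bases of $F_2$ and Farey triangles already invoked (via Luo and Margalit) in the preceding paragraphs. Once this bijection is in hand, the remainder of the proof is purely graph-theoretic, so the principal obstacle is conceptual rather than computational.
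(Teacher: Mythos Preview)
Your proof is correct and follows essentially the same idea as the paper: both arguments identify the type~(a) vertices with Farey vertices and the type~(b) vertices with Farey triangles, and then exploit the bipartite incidence structure. The only cosmetic difference is directional---the paper maps $\Pants_2^+$ into the $1$-skeleton of the barycentric subdivision $\hat{\Curve}(S_{1,1})$ (sending a pair of pants to the barycenter of its triangle) and shows this is an isometric embedding with $1$-dense image, whereas you map $\mathcal{F}$ into $\Pants_2^+$ and estimate distortion directly; your version avoids the intermediate subdivision and gives the explicit constants $\tfrac{1}{2}d_{\Pants_2^+}\le d_{\mathcal{F}}\le d_{\Pants_2^+}$, but the underlying combinatorics is identical.
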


\begin{proof}


    Let $S_{1,1}$ be a once-holed torus with a marking $h: R_n \to S_{1,1}$.  
    Consider the curve complex $\Curve(S_{1,1})$, which is the flag complex of the Farey graph.
    Let $\hat\Curve(S_{1,1})$ denote the barycentric subdivision of the curve complex $\Curve(S_{1,1})$.  
    Notice that the 1-skeleton of $\hat\Curve(S_{1,1})$ is quasi-isometric to $\Curve(S_{1,1})$.

    Now, define a map $\varphi: \Pants_2^+ \to \hat\Curve(S_{1,1})^{(1)}$ as follows:
    \begin{itemize}
        \item For each pants decomposition $P$ of $S_{1,1}$, let $\varphi(P)$ be $P$ itself.  
        \item For a pair of pants $Q$, define $\varphi(Q)$ as the midpoint of the vertices representing the boundary curves of $Q$.  
    \end{itemize}
    This map $\varphi$ preserves the adjacency of vertices, and we extend it to a simplicial map.  

    The image of $\varphi$ contains all vertices except for the midpoints of edges of $\mathcal{C}(S_{1,1})$.  
    Observe that the closed 1-neighborhood of the image of $\varphi$ covers $\hat\Curve(S_{1,1})^{(1)}$.  
    Thus, the image of $\varphi$ is 1-dense.  

    Now, consider two vertices $P', P'' \in \Pants_2^+$.  
    Let $\varphi(P') = \gamma_0, \gamma_1, \dots, \gamma_k = \varphi(P'')$ be a geodesic in $\hat\Curve(S_{1,1})$.  
    If some $\gamma_i$ is not contained in the image of $\varphi$, then it must be the midpoint of an edge.  
    This implies that both $\gamma_{i-1}$ and $\gamma_{i+1}$ belong to the image of $\varphi$.  

    Therefore, there exists a vertex $\gamma_i'$ in the image of $\varphi$ such that the sequence $\gamma_{i-1}, \gamma_i', \gamma_{i+1}$ forms a path.  
    By applying this method inductively, we obtain a geodesic path from $\varphi(P')$ to $\varphi(P'')$ that lies entirely within the image of $\varphi$.  

    This shows that $\varphi$ is an isometric embedding into $\hat\Curve(S_{1,1})$.  
    Hence, $\Pants_2^+$ is quasi-isometric to $\hat\Curve(S_{1,1})$.  
    Since $\hat\Curve(S_{1,1})$ is quasi-isometric to $\Curve(S_{1,1})$, it follows that $\Pants_2^+$ is quasi-isometric to $\Curve(S_{1,1})$.
\end{proof}

\section{Pants Decompositions and Pants Graphs of Free Groups}

In this section, we introduce four elementary moves that transform pants decompositions, and using these moves, we define the pants graph of the free group of rank $n$.
We now fix $n \geq 3$.
Let us identify the free group $F_n$ with a fundamental group of the $n$-rose $R_n$.

To define a pants decomposition of $F_n$, we need compact surfaces that are homotopy equivalent to $R_n$.  
An orientable surface of genus $g$ with $b$ boundary curves is homotopy equivalent to the $n$-rose if and only if $n = 2g - 1 + b$.  
For the non-orientable case, a non-orientable surface with $c$ crosscaps and $b$ boundary curves is homotopy equivalent to the $n$-rose if and only if $n = c - 1 + b$.

\subsection{Pants decomposition}



Let $R_n$ denote the $n$-rose.
We define a \emph{marked pants decomposition} as a triple $(P_X, h_X, X)$, where:
\begin{enumerate}
    \item $X$ is a compact surface with boundary,
    \item $h_X: R_n \to X$ is a homotopy equivalence, and
    \item $P_X$ is a pants decomposition of $X$.
\end{enumerate}
Two marked pants decompositions $(P_X, h_X, X)$ and $(P_Y, h_Y, Y)$ are said to be \emph{equivalent} if there exists a homeomorphism $f: X \to Y$ such that:  
\begin{enumerate}
    \item $f(P_X) = P_Y$, and
    \item $f \circ h_X$ is homotopic to $h_Y$.
\end{enumerate}
Let us write $[P_X, h_X, X]$ for the equivalence class of a marked pants decomposition $(P_X, h_X, X)$.  
An equivalence class of a marked pants decomposition is called a \emph{pants decomposition of $F_n$}.

\subsection{Elementary moves}

We now introduce elementary moves that modify one pants decomposition into another.  
For a compact orientable surface, Hatcher and Thurston \cite{MR579573} defined two elementary moves among pants decompositions of the surface, which they used to find a presentation of the mapping class group.  
Papadopoulos and Penner \cite{MR3460762} extended this framework, providing four elementary moves for pants decompositions of non-orientable surfaces.

\subsubsection{Elementary move I}

Elementary move I replaces a simple closed curve in a pants decomposition with another curve, such that the regular neighborhood of the union of the original and new curve forms a four-holed sphere.  
More precisely, for two pants decompositions $P$ and $P'$, we say that \emph{$P'$ is obtained from $P$ by Elementary move I} if:
\begin{enumerate}
    \item $P = [P_X, h_X, X]$ and $P' = [P_X', h_X, X]$ for some homotopy equivalence $h_X$, and
    \item there exist simple closed curves $\gamma$ and $\gamma'$ such that $P_X - \gamma = P_X' - \gamma'$ and a regular neighborhood of $\gamma \cup \gamma'$ is a four-holed sphere.
\end{enumerate}
See Figure \ref{fig:switching} for an illustration.

\begin{figure}[ht]
    \centering
    \begin{tikzcd}
    \begin{tikzpicture}[thick]
        \draw (-2,-1) arc (-90:90:1);
        \draw (-1,2) arc (-180:0:1);
        \draw (2,1) arc (90:270:1);
        \draw (1,-2) arc (0:180:1);
        
        \draw[dashed, rotate around={45:(1,1)}] (1,{sqrt(2)}) arc (90:270:{0.3*(sqrt(2)-1)} and {sqrt(2)-1});
        \draw[rotate around={45:(1,1)}] (1,{2-sqrt(2)}) arc (-90:90:{0.3*(sqrt(2)-1)} and {sqrt(2)-1});
        \draw[dashed, rotate around={135:(1,-1)}] (1,{-2+sqrt(2)}) arc (90:270:{0.3*(sqrt(2)-1)} and {sqrt(2)-1});
        \draw[rotate around={135:(1,-1)}] (1,{-sqrt(2)}) arc (-90:90:{0.3*(sqrt(2)-1)} and {sqrt(2)-1});
        \draw[rotate around={225:(-1,-1)}] (-1,{-2+sqrt(2)}) arc (90:270:{0.3*(sqrt(2)-1)} and {sqrt(2)-1});
        \draw[dashed, rotate around={225:(-1,-1)}] (-1,{-sqrt(2)}) arc (-90:90:{0.3*(sqrt(2)-1)} and {sqrt(2)-1});
        \draw[rotate around={315:(-1,1)}] (-1,{sqrt(2)}) arc (90:270:{0.3*(sqrt(2)-1)} and {sqrt(2)-1});
        \draw[dashed, rotate around={315:(-1,1)}] (-1,{2-sqrt(2)}) arc (-90:90:{0.3*(sqrt(2)-1)} and {sqrt(2)-1});

        \draw (1,0) arc (0:180:1 and 0.3);
        \draw[dashed] (-1,0) arc (180:360:1 and 0.3);
    \end{tikzpicture} \ar[r, leftrightarrow, "\mathrm{I}"]
    &
    \begin{tikzpicture}[thick]
        \draw (-2,-1) arc (-90:90:1);
        \draw (-1,2) arc (-180:0:1);
        \draw (2,1) arc (90:270:1);
        \draw (1,-2) arc (0:180:1);
        
        \draw[dashed, rotate around={45:(1,1)}] (1,{sqrt(2)}) arc (90:270:{0.3*(sqrt(2)-1)} and {sqrt(2)-1});
        \draw[rotate around={45:(1,1)}] (1,{2-sqrt(2)}) arc (-90:90:{0.3*(sqrt(2)-1)} and {sqrt(2)-1});
        \draw[dashed, rotate around={135:(1,-1)}] (1,{-2+sqrt(2)}) arc (90:270:{0.3*(sqrt(2)-1)} and {sqrt(2)-1});
        \draw[rotate around={135:(1,-1)}] (1,{-sqrt(2)}) arc (-90:90:{0.3*(sqrt(2)-1)} and {sqrt(2)-1});
        \draw[rotate around={225:(-1,-1)}] (-1,{-2+sqrt(2)}) arc (90:270:{0.3*(sqrt(2)-1)} and {sqrt(2)-1});
        \draw[dashed, rotate around={225:(-1,-1)}] (-1,{-sqrt(2)}) arc (-90:90:{0.3*(sqrt(2)-1)} and {sqrt(2)-1});
        \draw[rotate around={315:(-1,1)}] (-1,{sqrt(2)}) arc (90:270:{0.3*(sqrt(2)-1)} and {sqrt(2)-1});
        \draw[dashed, rotate around={315:(-1,1)}] (-1,{2-sqrt(2)}) arc (-90:90:{0.3*(sqrt(2)-1)} and {sqrt(2)-1});

        \draw (0,-1) arc (-90:90:0.3 and 1);
        \draw[dashed] (0,1) arc (90:270:0.3 and 1);
    \end{tikzpicture}
\end{tikzcd}
    \caption{Elementary move I}
    \label{fig:switching}
\end{figure}

\subsubsection{Elementary move II}

The key characteristic of Elementary move II is that it involves a modification of the surface itself. 
This move provides a homotopy equivalence $\eta: X \to Y$, which transforms a pants decomposition $[P_X, h_X, X]$ into a new pants decomposition $[P_Y, \eta \circ h_X, Y]$ for some marked pants decomposition $P_Y$, as illustrated in Figure \ref{fig:tying}.

To apply Elementary move II, the pants decomposition $[P_X, h_X, X]$ must satisfy one of the following conditions:
\begin{enumerate}
    \item $X$ is either a four-holed sphere or a twice-holed torus.
    \item There exists a separating curve $\gamma \subset P_X$ that bounds either a four-holed sphere or a twice-holed torus.
\end{enumerate}

If the curve $\gamma$ bounds a four-holed sphere $A$, the homotopy equivalence $\eta: X \to Y$ is constructed as follows:
Since $A$ consists of two pairs of pants in $P_X$, one of these pants is bounded by two boundary curves of $X$.
By using a deformation retract, we deform this pair of pants into a $2$-rose, as shown in the lower left of Figure \ref{fig:tying}.
This transformation results in a wedge sum of the surface and a circle.
Within the pair of pants in the lower left of Figure \ref{fig:tying}, there is an arc that joins the two boundary curves.
We fold this arc along with part of the circle, producing a wedge sum of the surface and an arc, as depicted in the lower right of Figure \ref{fig:tying}.
The configuration in the lower right of Figure \ref{fig:tying} is a deformation retract of the upper right configuration.
Thus, there is a homotopy equivalence from the upper left to the upper right of Figure \ref{fig:tying}.

\begin{figure}[ht]
    \centering
    \begin{tikzcd}
    \begin{tikzpicture}[thick]
        \draw ({-2*sqrt(3)/2+cos(-60)},{1+sin(-60)}) arc (-60:0:1);
        \draw ({2*sqrt(3)/2+cos(180)},{1+sin(180)}) arc (180:300:1);
        \draw ({cos(-20)},{-2+sin(-20)}) arc (-20:120:1);
        \draw ({2*sqrt(3)+cos(120)},{-2+sin(120)}) arc (120:{180+20}:1);
        
        \draw (0,1) ellipse ({sqrt(3)-1} and 0.2);
        \draw[rotate around={120:({cos(210)},{sin(210)})}] ({cos(210)},{sin(210)}) ellipse ({sqrt(3)-1} and 0.2);
        
        \draw[rotate around={240:({cos(330)},{sin(330)})}] ({cos(330)+cos(0)*(sqrt(3)-1)},{sin(330)+sin(0)*(sqrt(3)-1)}) arc (0:180:{sqrt(3)-1} and 0.2);
        \draw[dashed, rotate around={240:({cos(330)},{sin(330)})}] ({cos(330)+cos(180)*(sqrt(3)-1)},{sin(330)+sin(180)*(sqrt(3)-1)}) arc (180:360:{sqrt(3)-1} and 0.2);
        
        \draw[rotate around={-60:({sqrt(3)+cos(30)},{-1+sin(30)})}] ({sqrt(3)+cos(30)},{-1+sin(30)}) ellipse ({sqrt(3)-1} and 0.2);
        
        \draw[dashed] ({sqrt(3)+cos(270)+cos(0)*(sqrt(3)-1)},{-1+sin(270)+sin(0)*(sqrt(3)-1)}) arc (0:180:{sqrt(3)-1} and 0.2);
        \draw ({sqrt(3)+cos(270)+cos(180)*(sqrt(3)-1)},{-1+sin(270)+sin(180)*(sqrt(3)-1)}) arc (180:360:{sqrt(3)-1} and 0.2);
    \end{tikzpicture}
    \ar[r,leftrightarrow, "\mathrm{II}"] \ar[d,dotted,leftrightarrow]
    &
    \begin{tikzpicture}[thick]
        \draw (0,0) circle (0.5);
        \draw ({sqrt(3)-1+(0.5+(sqrt(3)-1))*cos(-atan(2/(2/((sqrt(3)-1)+0.5)-((sqrt(3)-1)+0.5)/2)))},{(0.5+(sqrt(3)-1))*sin(-atan(2/(2/((sqrt(3)-1)+0.5)-((sqrt(3)-1)+0.5)/2)))}) arc ({-atan(2/(2/((sqrt(3)-1)+0.5)-((sqrt(3)-1)+0.5)/2))}:{atan(2/(2/((sqrt(3)-1)+0.5)-((sqrt(3)-1)+0.5)/2))}:{0.5+(sqrt(3)-1)});
        \draw ({-(sqrt(3)-1)+(0.5+(sqrt(3)-1))*cos(180-atan(2/(2/((sqrt(3)-1)+0.5)-((sqrt(3)-1)+0.5)/2)))},{(0.5+(sqrt(3)-1))*sin(180-atan(2/(2/((sqrt(3)-1)+0.5)-((sqrt(3)-1)+0.5)/2)))}) arc ({180-atan(2/(2/((sqrt(3)-1)+0.5)-((sqrt(3)-1)+0.5)/2))}:{180+atan(2/(2/((sqrt(3)-1)+0.5)-((sqrt(3)-1)+0.5)/2))}:{0.5+(sqrt(3)-1)});

        \draw ({(sqrt(3)-1)+2/((sqrt(3)-1)+0.5)-((sqrt(3)-1)+0.5)/2+(2/((sqrt(3)-1)+0.5)-((sqrt(3)-1)+0.5)/2)*cos(180-atan(2/(2/((sqrt(3)-1)+0.5)-((sqrt(3)-1)+0.5)/2)))},{-2+(2/((sqrt(3)-1)+0.5)-((sqrt(3)-1)+0.5)/2)*sin(180-atan(2/(2/((sqrt(3)-1)+0.5)-((sqrt(3)-1)+0.5)/2)))}) arc ({180-atan(2/(2/((sqrt(3)-1)+0.5)-((sqrt(3)-1)+0.5)/2))}:200:{2/((sqrt(3)-1)+0.5)-((sqrt(3)-1)+0.5)/2});
        \draw ({-((sqrt(3)-1)+2/((sqrt(3)-1)+0.5)-((sqrt(3)-1)+0.5)/2)+(2/((sqrt(3)-1)+0.5)-((sqrt(3)-1)+0.5)/2)*cos(-20)},{-2+(2/((sqrt(3)-1)+0.5)-((sqrt(3)-1)+0.5)/2)*sin(-20)}) arc (-20:{atan(2/(2/((sqrt(3)-1)+0.5)-((sqrt(3)-1)+0.5)/2))}:{2/((sqrt(3)-1)+0.5)-((sqrt(3)-1)+0.5)/2});
        \draw ({(sqrt(3)-1)+2/((sqrt(3)-1)+0.5)-((sqrt(3)-1)+0.5)/2+(2/((sqrt(3)-1)+0.5)-((sqrt(3)-1)+0.5)/2)*cos(180)},{2+(2/((sqrt(3)-1)+0.5)-((sqrt(3)-1)+0.5)/2)*sin(180)}) arc (180:{180+atan(2/(2/((sqrt(3)-1)+0.5)-((sqrt(3)-1)+0.5)/2))}:{2/((sqrt(3)-1)+0.5)-((sqrt(3)-1)+0.5)/2});
        \draw ({-((sqrt(3)-1)+2/((sqrt(3)-1)+0.5)-((sqrt(3)-1)+0.5)/2)+(2/((sqrt(3)-1)+0.5)-((sqrt(3)-1)+0.5)/2)*cos(0)},{2+(2/((sqrt(3)-1)+0.5)-((sqrt(3)-1)+0.5)/2)*sin(0)}) arc (0:{-atan(2/(2/((sqrt(3)-1)+0.5)-((sqrt(3)-1)+0.5)/2))}:{2/((sqrt(3)-1)+0.5)-((sqrt(3)-1)+0.5)/2});

        \draw ({sqrt(3)-1},-2) arc (0:-180:{sqrt(3)-1} and 0.2);
        \draw[dashed] ({sqrt(3)-1},-2) arc (0:180:{sqrt(3)-1} and 0.2);
        \draw ({0.5+2*(sqrt(3)-1)},0) arc (0:-180:{sqrt(3)-1} and 0.2);
        \draw[dashed] ({0.5+2*(sqrt(3)-1)},0) arc (0:180:{sqrt(3)-1} and 0.2);
        \draw (-0.5,0) arc (0:-180:{sqrt(3)-1} and 0.2);
        \draw[dashed] (-0.5,0) arc (0:180:{sqrt(3)-1} and 0.2);
        \draw (0,2) ellipse ({sqrt(3)-1} and 0.2);
    \end{tikzpicture}
    \ar[d,dotted,leftrightarrow]
    \\
    \begin{tikzpicture}[thick]            
        \draw ({2*sqrt(3)/2+cos(240)},{1+sin(240)}) arc (240:300:1);
        \draw ({cos(-20)},{-2+sin(-20)}) arc (-20:60:1);
        \draw ({2*sqrt(3)+cos(120)},{-2+sin(120)}) arc (120:{180+20}:1);

        \draw[rotate around={240:({cos(330)},{sin(330)})}] ({cos(330)+cos(0)*(sqrt(3)-1)},{sin(330)+sin(0)*(sqrt(3)-1)}) arc (0:180:{sqrt(3)-1} and 0.2);
        \draw[rotate around={240:({cos(330)},{sin(330)})}] ({cos(330)+cos(180)*(sqrt(3)-1)},{sin(330)+sin(180)*(sqrt(3)-1)}) arc (180:360:{sqrt(3)-1} and 0.2);
        
        \draw[rotate around={-60:({sqrt(3)+cos(30)},{-1+sin(30)})}] ({sqrt(3)+cos(30)},{-1+sin(30)}) ellipse ({sqrt(3)-1} and 0.2);
        
        \draw[dashed] ({sqrt(3)+cos(270)+cos(0)*(sqrt(3)-1)},{-1+sin(270)+sin(0)*(sqrt(3)-1)}) arc (0:180:{sqrt(3)-1} and 0.2);
        \draw ({sqrt(3)+cos(270)+cos(180)*(sqrt(3)-1)},{-1+sin(270)+sin(180)*(sqrt(3)-1)}) arc (180:360:{sqrt(3)-1} and 0.2);

        \draw[rotate around={60:({cos(330)},{sin(330)})}] ({cos(330)+cos(0)*(sqrt(3)-1)+0.5},{sin(330)+sin(0)*(sqrt(3)-1)}) circle (0.5);
    \end{tikzpicture}
    \ar[r,dotted,leftrightarrow]
    &
    \begin{tikzpicture}[thick]
        \draw (0,0) circle (0.5);
        \draw ({sqrt(3)-1+(0.5+(sqrt(3)-1))*cos(-atan(2/(2/((sqrt(3)-1)+0.5)-((sqrt(3)-1)+0.5)/2)))},{(0.5+(sqrt(3)-1))*sin(-atan(2/(2/((sqrt(3)-1)+0.5)-((sqrt(3)-1)+0.5)/2)))}) arc ({-atan(2/(2/((sqrt(3)-1)+0.5)-((sqrt(3)-1)+0.5)/2))}:0:{0.5+(sqrt(3)-1)});
        \draw ({-(sqrt(3)-1)+(0.5+(sqrt(3)-1))*cos(180)},{(0.5+(sqrt(3)-1))*sin(180)}) arc (180:{180+atan(2/(2/((sqrt(3)-1)+0.5)-((sqrt(3)-1)+0.5)/2))}:{0.5+(sqrt(3)-1)});

        \draw ({(sqrt(3)-1)+2/((sqrt(3)-1)+0.5)-((sqrt(3)-1)+0.5)/2+(2/((sqrt(3)-1)+0.5)-((sqrt(3)-1)+0.5)/2)*cos(180-atan(2/(2/((sqrt(3)-1)+0.5)-((sqrt(3)-1)+0.5)/2)))},{-2+(2/((sqrt(3)-1)+0.5)-((sqrt(3)-1)+0.5)/2)*sin(180-atan(2/(2/((sqrt(3)-1)+0.5)-((sqrt(3)-1)+0.5)/2)))}) arc ({180-atan(2/(2/((sqrt(3)-1)+0.5)-((sqrt(3)-1)+0.5)/2))}:200:{2/((sqrt(3)-1)+0.5)-((sqrt(3)-1)+0.5)/2});
        \draw ({-((sqrt(3)-1)+2/((sqrt(3)-1)+0.5)-((sqrt(3)-1)+0.5)/2)+(2/((sqrt(3)-1)+0.5)-((sqrt(3)-1)+0.5)/2)*cos(-20)},{-2+(2/((sqrt(3)-1)+0.5)-((sqrt(3)-1)+0.5)/2)*sin(-20)}) arc (-20:{atan(2/(2/((sqrt(3)-1)+0.5)-((sqrt(3)-1)+0.5)/2))}:{2/((sqrt(3)-1)+0.5)-((sqrt(3)-1)+0.5)/2});

        \draw ({sqrt(3)-1},-2) arc (0:-180:{sqrt(3)-1} and 0.2);
        \draw[dashed] ({sqrt(3)-1},-2) arc (0:180:{sqrt(3)-1} and 0.2);
        \draw ({0.5+2*(sqrt(3)-1)},0) arc (0:-180:{sqrt(3)-1} and 0.2);
        \draw ({0.5+2*(sqrt(3)-1)},0) arc (0:180:{sqrt(3)-1} and 0.2);
        \draw (-0.5,0) arc (0:-180:{sqrt(3)-1} and 0.2);
        \draw (-0.5,0) arc (0:180:{sqrt(3)-1} and 0.2);
    \end{tikzpicture}
\end{tikzcd}
    \caption{Elementary move II}
    \label{fig:tying}
\end{figure}
Then there exists the unique boundary curve $\beta \subset X$ such that the union of $\eta(P_X)$ and $\eta(\beta)$ is a pants decomposition of $Y$, denoted by $P_Y$.
We say \emph{Elementary move II} is the move that modifies $[P_X, h_X, X]$ to $[P_Y, \eta \circ h_X, Y]$.
We also call the inverse of this move \emph{Elementary move II}.

\subsubsection{Elementary move III}

Elementary move III is one of the elementary moves introduced by Papadopoulos--Penner \cite{MR3460762}.
This move exchanges two one-sided curves in a pair of pants with a single two-sided curve, as shown in Figure \ref{fig:moveiii}.
Importantly, this move does not modify the underlying surface or affect the homotopy equivalence.

\begin{figure}[ht]
    \centering
    \begin{tikzcd}
    \begin{tikzpicture}[thick]
        \draw ({cos(240)},{2+sin(240)}) arc (240:300:1);
        \draw ({-sqrt(3)+cos(-20)},{-1+sin(-20)}) arc (-20:60:1);
        \draw ({sqrt(3)+cos(120)},{-1+sin(120)}) arc (120:{180+20}:1);

        \draw[ultra thick, rotate around={240:({cos(150)},{sin(150)})}] ({cos(150)},{sin(150)}) ellipse ({sqrt(3)-1} and 0.2);
        \draw[rotate around={240:({cos(150)},{sin(150)})}] ({cos(150)+(sqrt(3)-1)*cos(-135)},{sin(150)+0.2*sin(-135)})--({cos(150)+(sqrt(3)-1)*cos(45)},{sin(150)+0.2*sin(45)});
        \draw[rotate around={240:({cos(150)},{sin(150)})}] ({cos(150)+(sqrt(3)-1)*cos(-45)},{sin(150)+0.2*sin(-45)})--({cos(150)+(sqrt(3)-1)*cos(135)},{sin(150)+0.2*sin(135)});
        
        \draw[ultra thick, rotate around={-60:({cos(30)},{sin(30)})}] ({cos(30)},{sin(30)}) ellipse ({sqrt(3)-1} and 0.2);
        \draw[rotate around={-60:({cos(30)},{sin(30)})}] ({cos(30)+(sqrt(3)-1)*cos(-135)},{sin(30)+0.2*sin(-135)})--({cos(30)+(sqrt(3)-1)*cos(45)},{sin(30)+0.2*sin(45)});
        \draw[rotate around={-60:({cos(30)},{sin(30)})}] ({cos(30)+(sqrt(3)-1)*cos(-45)},{sin(30)+0.2*sin(-45)})--({cos(30)+(sqrt(3)-1)*cos(135)},{sin(30)+0.2*sin(135)});
        
        \draw[dashed] ({cos(270)+cos(0)*(sqrt(3)-1)},{sin(270)+sin(0)*(sqrt(3)-1)}) arc (0:180:{sqrt(3)-1} and 0.2);
        \draw ({cos(270)+cos(180)*(sqrt(3)-1)},{sin(270)+sin(180)*(sqrt(3)-1)}) arc (180:360:{sqrt(3)-1} and 0.2);
    \end{tikzpicture}
    \ar[r, leftrightarrow, "\mathrm{III}"]
    &
    \begin{tikzpicture}[thick]
        \draw ({cos(240)},{2+sin(240)}) arc (240:300:1);
        \draw ({-sqrt(3)+cos(-20)},{-1+sin(-20)}) arc (-20:60:1);
        \draw ({sqrt(3)+cos(120)},{-1+sin(120)}) arc (120:{180+20}:1);

        \draw[rotate around={240:({cos(150)},{sin(150)})}] ({cos(150)},{sin(150)}) ellipse ({sqrt(3)-1} and 0.2);
        \draw[rotate around={240:({cos(150)},{sin(150)})}] ({cos(150)+(sqrt(3)-1)*cos(-135)},{sin(150)+0.2*sin(-135)})--({cos(150)+(sqrt(3)-1)*cos(45)},{sin(150)+0.2*sin(45)});
        \draw[rotate around={240:({cos(150)},{sin(150)})}] ({cos(150)+(sqrt(3)-1)*cos(-45)},{sin(150)+0.2*sin(-45)})--({cos(150)+(sqrt(3)-1)*cos(135)},{sin(150)+0.2*sin(135)});
        
        \draw[rotate around={-60:({cos(30)},{sin(30)})}] ({cos(30)},{sin(30)}) ellipse ({sqrt(3)-1} and 0.2);
        \draw[rotate around={-60:({cos(30)},{sin(30)})}] ({cos(30)+(sqrt(3)-1)*cos(-135)},{sin(30)+0.2*sin(-135)})--({cos(30)+(sqrt(3)-1)*cos(45)},{sin(30)+0.2*sin(45)});
        \draw[rotate around={-60:({cos(30)},{sin(30)})}] ({cos(30)+(sqrt(3)-1)*cos(-45)},{sin(30)+0.2*sin(-45)})--({cos(30)+(sqrt(3)-1)*cos(135)},{sin(30)+0.2*sin(135)});
        
        \draw[dashed] ({cos(270)+cos(0)*(sqrt(3)-1)},{sin(270)+sin(0)*(sqrt(3)-1)}) arc (0:180:{sqrt(3)-1} and 0.2);
        \draw ({cos(270)+cos(180)*(sqrt(3)-1)},{sin(270)+sin(180)*(sqrt(3)-1)}) arc (180:360:{sqrt(3)-1} and 0.2);

        \draw ({cos(150)+(sqrt(3)-1)*cos(90)*cos(240)-0.2*sin(90)*sin(240)},{sin(150)+(sqrt(3)-1)*cos(90)*sin(240)+0.2*sin(90)*cos(240)}) to [bend right=20] ({cos(30)+(sqrt(3)-1)*cos(-90)*cos(-60)-0.2*sin(-90)*sin(-60)},{sin(30)+(sqrt(3)-1)*cos(-90)*sin(-60)+0.2*sin(-90)*cos(-60)});
        \draw[dashed] ({cos(150)+(sqrt(3)-1)*cos(-90)*cos(240)-0.2*sin(-90)*sin(240)},{sin(150)+(sqrt(3)-1)*cos(-90)*sin(240)+0.2*sin(-90)*cos(240)}) to [bend right=20] ({cos(30)+(sqrt(3)-1)*cos(90)*cos(-60)-0.2*sin(90)*sin(-60)},{sin(30)+(sqrt(3)-1)*cos(90)*sin(-60)+0.2*sin(90)*cos(-60)});
    \end{tikzpicture}
\end{tikzcd}
    \caption{Elementary move III}
    \label{fig:moveiii}
\end{figure}

\subsubsection{Elementary move IV}

``Elementary move IV'' deforms a pair of pants into a twice-holed real projective plane.
Consider a pants decomposition $[P_X, h_X, X]$ in which there are two boundary curves, $\beta_0$ and $\beta_1$, and a simple closed curve $\gamma \subset P_X$ such that $\beta_0 \cup \beta_1 \cup \gamma$ forms a pair of pants.
As illustrated by Figure \ref{fig:moveiv}, Elementary move IV produces another pants decomposition $[P_Y, h_Y, Y]$ from $[P_X, h_X, X]$ through the following process:
\begin{enumerate}
    \item First, deform the pair of pants into a figure-eight shape using a deformation retract.
    \item Then, embed this figure-eight into a surface $Y$ via another deformation retract, where a regular neighborhood of the figure-eight is homeomorphic to a twice-holed real projective plane.
    \item Let $h_Y$ denote the homotopy equivalence induced by the deformation retract, and let $P_Y$ be the image of $P_X$ under this transformation.
\end{enumerate}

\begin{figure}[ht]
    \centering
    \begin{tikzcd}
    \begin{tikzpicture}[thick]
        \draw ({cos(240)},{2+sin(240)}) arc (240:300:1);
        \draw ({-sqrt(3)+cos(-20)},{-1+sin(-20)}) arc (-20:60:1);
        \draw ({sqrt(3)+cos(120)},{-1+sin(120)}) arc (120:{180+20}:1);

        \draw[rotate around={240:({cos(150)},{sin(150)})}] ({cos(150)},{sin(150)}) ellipse ({sqrt(3)-1} and 0.2);            
        \draw[rotate around={-60:({cos(30)},{sin(30)})}] ({cos(30)},{sin(30)}) ellipse ({sqrt(3)-1} and 0.2);
        
        \draw[dashed] ({cos(270)+cos(0)*(sqrt(3)-1)},{sin(270)+sin(0)*(sqrt(3)-1)}) arc (0:180:{sqrt(3)-1} and 0.2);
        \draw ({cos(270)+cos(180)*(sqrt(3)-1)},{sin(270)+sin(180)*(sqrt(3)-1)}) arc (180:360:{sqrt(3)-1} and 0.2);
    \end{tikzpicture}
    \ar[rr, leftrightarrow, "\mathrm{IV}"]
    &&
    \begin{tikzpicture}[thick]
        \draw ({cos(240)},{2+sin(240)}) arc (240:300:1);
        \draw ({-sqrt(3)+cos(-20)},{-1+sin(-20)}) arc (-20:60:1);
        \draw ({sqrt(3)+cos(120)},{-1+sin(120)}) arc (120:{180+20}:1);

        \draw[ultra thick, rotate around={240:({cos(150)},{sin(150)})}] ({cos(150)},{sin(150)}) ellipse ({sqrt(3)-1} and 0.2);
        \draw[rotate around={240:({cos(150)},{sin(150)})}] ({cos(150)+(sqrt(3)-1)*cos(-135)},{sin(150)+0.2*sin(-135)})--({cos(150)+(sqrt(3)-1)*cos(45)},{sin(150)+0.2*sin(45)});
        \draw[rotate around={240:({cos(150)},{sin(150)})}] ({cos(150)+(sqrt(3)-1)*cos(-45)},{sin(150)+0.2*sin(-45)})--({cos(150)+(sqrt(3)-1)*cos(135)},{sin(150)+0.2*sin(135)});
        
        \draw[rotate around={-60:({cos(30)},{sin(30)})}] ({cos(30)},{sin(30)}) ellipse ({sqrt(3)-1} and 0.2);
        
        \draw[dashed] ({cos(270)+cos(0)*(sqrt(3)-1)},{sin(270)+sin(0)*(sqrt(3)-1)}) arc (0:180:{sqrt(3)-1} and 0.2);
        \draw ({cos(270)+cos(180)*(sqrt(3)-1)},{sin(270)+sin(180)*(sqrt(3)-1)}) arc (180:360:{sqrt(3)-1} and 0.2);
    \end{tikzpicture}
    \\&
    \begin{tikzpicture}[thick]
        \draw ({-sqrt(3)+cos(-20)},{-1+sin(-20)}) arc (-20:60:1);
        \draw ({sqrt(3)+cos(180)},{-1+sin(180)}) arc (180:{180+20}:1);
        \draw[rotate around={240:({cos(150)},{sin(150)})}] ({cos(150)},{sin(150)}) ellipse ({sqrt(3)-1} and 0.2);
        \draw ({cos(270)},{sin(270)}) ellipse ({sqrt(3)-1} and 0.2);
    \end{tikzpicture}
    \ar[ul, dashed, hookrightarrow]
    \ar[ur, dashed, hookrightarrow]
    &
\end{tikzcd}
    \caption{Elementary move IV}
    \label{fig:moveiv}
\end{figure}

\subsection{(Non-orientable) Pants graphs}

We define a simplicial graph $\Pants_n$ as follows:
Consider all pants decompositions of the $n$-rose as the vertices of $\Pants_n$. 
Two vertices are joined by an edge in $\Pants_n$ if and only if there exist representatives of these vertices such that one is obtained from the other by Elementary move I, II, III, or IV.
We call $\Pants_n$ the \emph{pants graph} of $F_n$.

Next, we define a simplicial action of $\Out(F_n)$ on $\Pants_n$:
For each $\phi \in \Out(F_n)$ and $P \in \Pants_n$, let $f: R_n \to R_n$ be a representative of $\phi$, and let $(P_X, h_X, X)$ be a representative of $P$. 
We define $\phi.P$ as the equivalence class of the new pants decomposition $(P_X, h_X \circ f^{-1}, X)$.

\subsection{Orientable pants graph}

We also define a simplicial graph $\Pants^+_n$ for all pants decompositions of orientable surfaces marked by $R_n$, which forms an induced subgraph of $\Pants_n$.
In this case, the vertices of $\Pants^+_n$ are joined by edges using only Elementary moves I and II.
We call $\Pants_n^+$ the \emph{orientable pants graph} of $F_n$.

This construction is $\Out(F_n)$--invariant, so that one may regard it as an $\Out(F_n)$--graph. 
More precisely, for each $P \in \Pants_n^+$ and $\phi \in \Out(F_n)$, there exist representatives $(P_X,h_X,X) \in P$ and $f \in \phi$ such that $\phi.P$ has a representative of the form $(P_X, h_X \circ f^{-1}, X)$, which is again a pants decomposition of an orientable surface. 
Hence, $f.P \in \Pants_n^+$.

\begin{lemma} \label{lem:distance_three}
    Let \(h_X: R_n \to X\) be a homotopy equivalence into an orientable surface \(X\). Let \((P_X, h_X, X)\) and \((Q_X, h_X, X)\) be pants decompositions that differ by two curves, as illustrated in Figure \ref{fig:distance_three}. More precisely, assume the following.
    \begin{enumerate}
        \item There is a separating curve of \(P_X \cap Q_X\) that bounds a twice-holed torus \(Y \subset X\);
        \item \( P_X - Y = Q_X - Y \); and
        \item For all essential simple closed curves \(\alpha \subset P_X \cap Y\) and \(\beta \subset Q_X \cap Y\), their intersection number satisfies \(i(\alpha, \beta) = 1\).
    \end{enumerate}
    Then the distance in $\Pants_n^+$ between the vertices corresponding to $[P_X, h_X, X]$ and $[Q_X, h_X, X]$ is at most three.
    \begin{figure}[ht]
        \centering
        \begin{tikzcd}
    \begin{tikzpicture}[thick]
        \draw (0,0) circle (0.5);
        \draw ({sqrt(3)-1+(0.5+(sqrt(3)-1))*cos(-atan(2/(2/((sqrt(3)-1)+0.5)-((sqrt(3)-1)+0.5)/2)))},{(0.5+(sqrt(3)-1))*sin(-atan(2/(2/((sqrt(3)-1)+0.5)-((sqrt(3)-1)+0.5)/2)))}) arc ({-atan(2/(2/((sqrt(3)-1)+0.5)-((sqrt(3)-1)+0.5)/2))}:{atan(2/(2/((sqrt(3)-1)+0.5)-((sqrt(3)-1)+0.5)/2))}:{0.5+(sqrt(3)-1)});
        \draw ({-(sqrt(3)-1)+(0.5+(sqrt(3)-1))*cos(180-atan(2/(2/((sqrt(3)-1)+0.5)-((sqrt(3)-1)+0.5)/2)))},{(0.5+(sqrt(3)-1))*sin(180-atan(2/(2/((sqrt(3)-1)+0.5)-((sqrt(3)-1)+0.5)/2)))}) arc ({180-atan(2/(2/((sqrt(3)-1)+0.5)-((sqrt(3)-1)+0.5)/2))}:{180+atan(2/(2/((sqrt(3)-1)+0.5)-((sqrt(3)-1)+0.5)/2))}:{0.5+(sqrt(3)-1)});

        \draw ({(sqrt(3)-1)+2/((sqrt(3)-1)+0.5)-((sqrt(3)-1)+0.5)/2+(2/((sqrt(3)-1)+0.5)-((sqrt(3)-1)+0.5)/2)*cos(180-atan(2/(2/((sqrt(3)-1)+0.5)-((sqrt(3)-1)+0.5)/2)))},{-2+(2/((sqrt(3)-1)+0.5)-((sqrt(3)-1)+0.5)/2)*sin(180-atan(2/(2/((sqrt(3)-1)+0.5)-((sqrt(3)-1)+0.5)/2)))}) arc ({180-atan(2/(2/((sqrt(3)-1)+0.5)-((sqrt(3)-1)+0.5)/2))}:200:{2/((sqrt(3)-1)+0.5)-((sqrt(3)-1)+0.5)/2});
        \draw ({-((sqrt(3)-1)+2/((sqrt(3)-1)+0.5)-((sqrt(3)-1)+0.5)/2)+(2/((sqrt(3)-1)+0.5)-((sqrt(3)-1)+0.5)/2)*cos(-20)},{-2+(2/((sqrt(3)-1)+0.5)-((sqrt(3)-1)+0.5)/2)*sin(-20)}) arc (-20:{atan(2/(2/((sqrt(3)-1)+0.5)-((sqrt(3)-1)+0.5)/2))}:{2/((sqrt(3)-1)+0.5)-((sqrt(3)-1)+0.5)/2});
        \draw ({(sqrt(3)-1)+2/((sqrt(3)-1)+0.5)-((sqrt(3)-1)+0.5)/2+(2/((sqrt(3)-1)+0.5)-((sqrt(3)-1)+0.5)/2)*cos(180)},{2+(2/((sqrt(3)-1)+0.5)-((sqrt(3)-1)+0.5)/2)*sin(180)}) arc (180:{180+atan(2/(2/((sqrt(3)-1)+0.5)-((sqrt(3)-1)+0.5)/2))}:{2/((sqrt(3)-1)+0.5)-((sqrt(3)-1)+0.5)/2});
        \draw ({-((sqrt(3)-1)+2/((sqrt(3)-1)+0.5)-((sqrt(3)-1)+0.5)/2)+(2/((sqrt(3)-1)+0.5)-((sqrt(3)-1)+0.5)/2)*cos(0)},{2+(2/((sqrt(3)-1)+0.5)-((sqrt(3)-1)+0.5)/2)*sin(0)}) arc (0:{-atan(2/(2/((sqrt(3)-1)+0.5)-((sqrt(3)-1)+0.5)/2))}:{2/((sqrt(3)-1)+0.5)-((sqrt(3)-1)+0.5)/2});

        \draw ({sqrt(3)-1},-2) arc (0:-180:{sqrt(3)-1} and 0.2);
        \draw[dashed] ({sqrt(3)-1},-2) arc (0:180:{sqrt(3)-1} and 0.2);
        \draw ({0.5+2*(sqrt(3)-1)},0) arc (0:-180:{sqrt(3)-1} and 0.2);
        \draw[dashed] ({0.5+2*(sqrt(3)-1)},0) arc (0:180:{sqrt(3)-1} and 0.2);
        \draw (-0.5,0) arc (0:-180:{sqrt(3)-1} and 0.2);
        \draw[dashed] (-0.5,0) arc (0:180:{sqrt(3)-1} and 0.2);
        \draw (0,2) ellipse ({sqrt(3)-1} and 0.2);
    \end{tikzpicture}
    \ar[r, dashed, leftrightarrow]
    &
    \begin{tikzpicture}[thick]
        \draw (0,0) circle (0.5);
        \draw ({sqrt(3)-1+(0.5+(sqrt(3)-1))*cos(-atan(2/(2/((sqrt(3)-1)+0.5)-((sqrt(3)-1)+0.5)/2)))},{(0.5+(sqrt(3)-1))*sin(-atan(2/(2/((sqrt(3)-1)+0.5)-((sqrt(3)-1)+0.5)/2)))}) arc ({-atan(2/(2/((sqrt(3)-1)+0.5)-((sqrt(3)-1)+0.5)/2))}:{atan(2/(2/((sqrt(3)-1)+0.5)-((sqrt(3)-1)+0.5)/2))}:{0.5+(sqrt(3)-1)});
        \draw ({-(sqrt(3)-1)+(0.5+(sqrt(3)-1))*cos(180-atan(2/(2/((sqrt(3)-1)+0.5)-((sqrt(3)-1)+0.5)/2)))},{(0.5+(sqrt(3)-1))*sin(180-atan(2/(2/((sqrt(3)-1)+0.5)-((sqrt(3)-1)+0.5)/2)))}) arc ({180-atan(2/(2/((sqrt(3)-1)+0.5)-((sqrt(3)-1)+0.5)/2))}:{180+atan(2/(2/((sqrt(3)-1)+0.5)-((sqrt(3)-1)+0.5)/2))}:{0.5+(sqrt(3)-1)});

        \draw ({(sqrt(3)-1)+2/((sqrt(3)-1)+0.5)-((sqrt(3)-1)+0.5)/2+(2/((sqrt(3)-1)+0.5)-((sqrt(3)-1)+0.5)/2)*cos(180-atan(2/(2/((sqrt(3)-1)+0.5)-((sqrt(3)-1)+0.5)/2)))},{-2+(2/((sqrt(3)-1)+0.5)-((sqrt(3)-1)+0.5)/2)*sin(180-atan(2/(2/((sqrt(3)-1)+0.5)-((sqrt(3)-1)+0.5)/2)))}) arc ({180-atan(2/(2/((sqrt(3)-1)+0.5)-((sqrt(3)-1)+0.5)/2))}:200:{2/((sqrt(3)-1)+0.5)-((sqrt(3)-1)+0.5)/2});
        \draw ({-((sqrt(3)-1)+2/((sqrt(3)-1)+0.5)-((sqrt(3)-1)+0.5)/2)+(2/((sqrt(3)-1)+0.5)-((sqrt(3)-1)+0.5)/2)*cos(-20)},{-2+(2/((sqrt(3)-1)+0.5)-((sqrt(3)-1)+0.5)/2)*sin(-20)}) arc (-20:{atan(2/(2/((sqrt(3)-1)+0.5)-((sqrt(3)-1)+0.5)/2))}:{2/((sqrt(3)-1)+0.5)-((sqrt(3)-1)+0.5)/2});
        \draw ({(sqrt(3)-1)+2/((sqrt(3)-1)+0.5)-((sqrt(3)-1)+0.5)/2+(2/((sqrt(3)-1)+0.5)-((sqrt(3)-1)+0.5)/2)*cos(180)},{2+(2/((sqrt(3)-1)+0.5)-((sqrt(3)-1)+0.5)/2)*sin(180)}) arc (180:{180+atan(2/(2/((sqrt(3)-1)+0.5)-((sqrt(3)-1)+0.5)/2))}:{2/((sqrt(3)-1)+0.5)-((sqrt(3)-1)+0.5)/2});
        \draw ({-((sqrt(3)-1)+2/((sqrt(3)-1)+0.5)-((sqrt(3)-1)+0.5)/2)+(2/((sqrt(3)-1)+0.5)-((sqrt(3)-1)+0.5)/2)*cos(0)},{2+(2/((sqrt(3)-1)+0.5)-((sqrt(3)-1)+0.5)/2)*sin(0)}) arc (0:{-atan(2/(2/((sqrt(3)-1)+0.5)-((sqrt(3)-1)+0.5)/2))}:{2/((sqrt(3)-1)+0.5)-((sqrt(3)-1)+0.5)/2});

        \draw ({sqrt(3)-1},-2) arc (0:-180:{sqrt(3)-1} and 0.2);
        \draw[dashed] ({sqrt(3)-1},-2) arc (0:180:{sqrt(3)-1} and 0.2);
        \draw (0,0) circle (1);
        \draw (-1.4,0) arc (-180:0:1.4) -- (1.4,0.8) arc (0:35:1.4 and 0.673);
        \draw (-1.4,0) -- (-1.4,0.8) arc (180:145:1.4 and 0.673);
        \draw[dashed] (1.4,0.8) arc (0:180:1.4 and 0.673);
        \draw (0,2) ellipse ({sqrt(3)-1} and 0.2);
    \end{tikzpicture}
\end{tikzcd}
        \caption{Their distance in $\Pants_n^+$ is at most three.}
        \label{fig:distance_three}
    \end{figure}
\end{lemma}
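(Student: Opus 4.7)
The plan is to construct an explicit path of length at most three in $\Pants_n^+$ connecting $[P_X,h_X,X]$ to $[Q_X,h_X,X]$. Since the orientable pants graph admits only Elementary moves of type I and II, the path must be realized by at most three such moves through at most two intermediate pants decompositions.

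I would first localize the change to a subsurface $N \subset X$ bounded by the curves shared between $P_X$ and $Q_X$; by inspection of Figure \ref{fig:distance_three}, $N$ contains precisely the two curves of $P_X$ that get replaced by the two curves of $Q_X$. The key obstruction to distance two is that after any single Elementary move I applied to $P_X$, the remaining target curve does not sit inside a four-holed sphere neighborhood together with the curve it is meant to replace, so no second Move I can directly complete the exchange.

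My strategy for bypassing this obstruction is to route the path through a pants decomposition of a different marked surface via Elementary move II. Concretely, I would first apply an Elementary move II to $[P_X,h_X,X]$ along a separating curve of $P_X$ that bounds either a four-holed sphere or a twice-holed torus inside $N$; this yields an intermediate pants decomposition $[P^{(1)},h_X\circ\eta^{-1},Y]$ on a marked surface $Y$ of slightly lower complexity in which the residual two-curve modification becomes a single curve swap. Second, I would apply a single Elementary move I on $Y$ to exchange that curve for its image under the natural correspondence, yielding $[P^{(2)},h_X\circ\eta^{-1},Y]$. Third, I would apply the reverse Elementary move II to return to the surface $X$, now carrying the pants decomposition $Q_X$. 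An alternative route, to be tried if the surface in Figure \ref{fig:distance_three} does not admit the separating curve required for Move II, is to perform three consecutive Move I's through two auxiliary intermediate curves, the first of which is chosen so that it creates the four-holed sphere neighborhood needed to make the subsequent swaps legal.

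The main obstacle I anticipate is verifying that the composition of the three moves lands on exactly $Q_X$ rather than on a nearby pants decomposition. This requires careful bookkeeping of the homotopy equivalences: if Move II is used at both ends, the two deformation retracts must compose, together with the intermediate Move I, to realize precisely the curve exchange depicted in Figure \ref{fig:distance_three}. A secondary technical point is confirming that the chosen separating curve in $P_X$ genuinely bounds a four-holed sphere or a twice-holed torus; this follows from the local topology visible in the figure together with the hypothesis $n \geq 3$, which provides enough complexity in $X$ for the auxiliary subsurface to exist.
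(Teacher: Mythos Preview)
Your overall structure (Move~II, then Move~I, then Move~II) is exactly the path the paper uses, so the strategy is right. But the ``main obstacle'' you flag---that the two Move~II's and the intermediate Move~I must compose to land precisely on $[Q_X,h_X,X]$---is the entire content of the proof, and you have not resolved it; you have only named it. Saying ``careful bookkeeping of the homotopy equivalences'' is not a proof, and in fact the bookkeeping does not check itself: a Move~II applied to the twice-holed torus piece is not uniquely determined, and an arbitrary choice on the $P_X$ side followed by an arbitrary Move~I and an arbitrary reverse Move~II need not return the marking $h_X$.

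The paper's resolution is concrete: it exhibits three specific simple closed curves on the twice-holed torus (one belonging to $P_X$, one belonging to $Q_X$, and one auxiliary curve) with the property that the Move~II applied from the $P_X$ side and the Move~II applied from the $Q_X$ side each carry all three of these curves to boundary components of the resulting four-holed sphere. Since the four boundary conjugacy classes of the four-holed sphere (these three plus the separating curve $\gamma$ attaching to the rest of $X$) agree on both sides, the two four-holed spheres are marking-equivalent, and one then checks that the single remaining pants curve on each side differ by one Move~I. This explicit choice of curves is the missing idea in your proposal.

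Your alternative route of three Move~I's on the fixed surface $X$ should be discarded: the two decompositions in the figure differ essentially by an $S$-move (a once-holed-torus swap) composed with an $A$-move, and there is no a priori bound of three on the number of four-holed-sphere moves needed to realize this within the surface pants graph---indeed, avoiding this is precisely why the paper routes through Move~II. Also, a minor point: the separating curve you need does not bound a subsurface ``inside $N$''; it bounds $N$ itself, and the marking after Move~II is $\eta\circ h_X$, not $h_X\circ\eta^{-1}$.
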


\begin{proof}
    \begin{figure}[ht]
        \centering
        \begin{tikzcd}
    \begin{tikzpicture}[thick]
        \draw (0,0) circle (0.5);
        \draw ({sqrt(3)-1+(0.5+(sqrt(3)-1))*cos(-atan(2/(2/((sqrt(3)-1)+0.5)-((sqrt(3)-1)+0.5)/2)))},{(0.5+(sqrt(3)-1))*sin(-atan(2/(2/((sqrt(3)-1)+0.5)-((sqrt(3)-1)+0.5)/2)))}) arc ({-atan(2/(2/((sqrt(3)-1)+0.5)-((sqrt(3)-1)+0.5)/2))}:{atan(2/(2/((sqrt(3)-1)+0.5)-((sqrt(3)-1)+0.5)/2))}:{0.5+(sqrt(3)-1)});
        \draw ({-(sqrt(3)-1)+(0.5+(sqrt(3)-1))*cos(180-atan(2/(2/((sqrt(3)-1)+0.5)-((sqrt(3)-1)+0.5)/2)))},{(0.5+(sqrt(3)-1))*sin(180-atan(2/(2/((sqrt(3)-1)+0.5)-((sqrt(3)-1)+0.5)/2)))}) arc ({180-atan(2/(2/((sqrt(3)-1)+0.5)-((sqrt(3)-1)+0.5)/2))}:{180+atan(2/(2/((sqrt(3)-1)+0.5)-((sqrt(3)-1)+0.5)/2))}:{0.5+(sqrt(3)-1)});
    
        \draw ({(sqrt(3)-1)+2/((sqrt(3)-1)+0.5)-((sqrt(3)-1)+0.5)/2+(2/((sqrt(3)-1)+0.5)-((sqrt(3)-1)+0.5)/2)*cos(180-atan(2/(2/((sqrt(3)-1)+0.5)-((sqrt(3)-1)+0.5)/2)))},{-2+(2/((sqrt(3)-1)+0.5)-((sqrt(3)-1)+0.5)/2)*sin(180-atan(2/(2/((sqrt(3)-1)+0.5)-((sqrt(3)-1)+0.5)/2)))}) arc ({180-atan(2/(2/((sqrt(3)-1)+0.5)-((sqrt(3)-1)+0.5)/2))}:200:{2/((sqrt(3)-1)+0.5)-((sqrt(3)-1)+0.5)/2});
        \draw ({-((sqrt(3)-1)+2/((sqrt(3)-1)+0.5)-((sqrt(3)-1)+0.5)/2)+(2/((sqrt(3)-1)+0.5)-((sqrt(3)-1)+0.5)/2)*cos(-20)},{-2+(2/((sqrt(3)-1)+0.5)-((sqrt(3)-1)+0.5)/2)*sin(-20)}) arc (-20:{atan(2/(2/((sqrt(3)-1)+0.5)-((sqrt(3)-1)+0.5)/2))}:{2/((sqrt(3)-1)+0.5)-((sqrt(3)-1)+0.5)/2});
        \draw ({(sqrt(3)-1)+2/((sqrt(3)-1)+0.5)-((sqrt(3)-1)+0.5)/2+(2/((sqrt(3)-1)+0.5)-((sqrt(3)-1)+0.5)/2)*cos(180)},{2+(2/((sqrt(3)-1)+0.5)-((sqrt(3)-1)+0.5)/2)*sin(180)}) arc (180:{180+atan(2/(2/((sqrt(3)-1)+0.5)-((sqrt(3)-1)+0.5)/2))}:{2/((sqrt(3)-1)+0.5)-((sqrt(3)-1)+0.5)/2});
        \draw ({-((sqrt(3)-1)+2/((sqrt(3)-1)+0.5)-((sqrt(3)-1)+0.5)/2)+(2/((sqrt(3)-1)+0.5)-((sqrt(3)-1)+0.5)/2)*cos(0)},{2+(2/((sqrt(3)-1)+0.5)-((sqrt(3)-1)+0.5)/2)*sin(0)}) arc (0:{-atan(2/(2/((sqrt(3)-1)+0.5)-((sqrt(3)-1)+0.5)/2))}:{2/((sqrt(3)-1)+0.5)-((sqrt(3)-1)+0.5)/2});
    
        \draw ({sqrt(3)-1},-2) arc (0:-180:{sqrt(3)-1} and 0.2);
        \draw[dashed] ({sqrt(3)-1},-2) arc (0:180:{sqrt(3)-1} and 0.2);
        \draw (-0.5,0) arc (0:-180:{sqrt(3)-1} and 0.2);
        \draw[dashed] (-0.5,0) arc (0:180:{sqrt(3)-1} and 0.2);
        \draw (0,2) ellipse ({sqrt(3)-1} and 0.2);
    
        \draw [ultra thick, red] ({0.5+2*(sqrt(3)-1)},0) arc (0:-180:{sqrt(3)-1} and 0.2);
        \draw[ultra thick, dashed, red] ({0.5+2*(sqrt(3)-1)},0) arc (0:180:{sqrt(3)-1} and 0.2);
        \draw[ultra thick, red] (0,0) circle (1);
        \draw[ultra thick, red] (-1.9, -0.4) to[out=-65, in={-150-90}] (-150:1.4);
        \draw[ultra thick, red] (-150:1.4) arc (-150:150:1.4);
        \draw[ultra thick, red] (150:1.4) to[out={150+90}, in={150-90}] (150:0.5);
        \draw[ultra thick, red, dashed] (150:0.5) to[out=240, in=115] (-1.9,-0.4);
    \end{tikzpicture}
    \ar[r, leftrightarrow, "\mathrm{II}"] &
    \begin{tikzpicture}[thick]
        \draw ({-2*sqrt(3)/2+cos(-60)},{1+sin(-60)}) arc (-60:0:1);
        \draw ({2*sqrt(3)/2+cos(180)},{1+sin(180)}) arc (180:300:1);
        \draw ({cos(-20)},{-2+sin(-20)}) arc (-20:120:1);
        \draw ({2*sqrt(3)+cos(120)},{-2+sin(120)}) arc (120:{180+20}:1);
        
        \draw[ultra thick, red] (0,1) ellipse ({sqrt(3)-1} and 0.2);
        \draw[ultra thick, red, rotate around={120:({cos(210)},{sin(210)})}] ({cos(210)},{sin(210)}) ellipse ({sqrt(3)-1} and 0.2);
        
        \draw[rotate around={240:({cos(330)},{sin(330)})}] ({cos(330)+cos(0)*(sqrt(3)-1)},{sin(330)+sin(0)*(sqrt(3)-1)}) arc (0:180:{sqrt(3)-1} and 0.2);
        \draw[dashed, rotate around={240:({cos(330)},{sin(330)})}] ({cos(330)+cos(180)*(sqrt(3)-1)},{sin(330)+sin(180)*(sqrt(3)-1)}) arc (180:360:{sqrt(3)-1} and 0.2);
        
        \draw[ultra thick, red, rotate around={-60:({sqrt(3)+cos(30)},{-1+sin(30)})}] ({sqrt(3)+cos(30)},{-1+sin(30)}) ellipse ({sqrt(3)-1} and 0.2);
        
        \draw[dashed] ({sqrt(3)+cos(270)+cos(0)*(sqrt(3)-1)},{-1+sin(270)+sin(0)*(sqrt(3)-1)}) arc (0:180:{sqrt(3)-1} and 0.2);
        \draw ({sqrt(3)+cos(270)+cos(180)*(sqrt(3)-1)},{-1+sin(270)+sin(180)*(sqrt(3)-1)}) arc (180:360:{sqrt(3)-1} and 0.2);
    \end{tikzpicture}
    \ar[d, leftrightarrow, "\mathrm{I}"] \\
    \begin{tikzpicture}[thick]
        \draw (0,0) circle (0.5);
        \draw ({sqrt(3)-1+(0.5+(sqrt(3)-1))*cos(-atan(2/(2/((sqrt(3)-1)+0.5)-((sqrt(3)-1)+0.5)/2)))},{(0.5+(sqrt(3)-1))*sin(-atan(2/(2/((sqrt(3)-1)+0.5)-((sqrt(3)-1)+0.5)/2)))}) arc ({-atan(2/(2/((sqrt(3)-1)+0.5)-((sqrt(3)-1)+0.5)/2))}:{atan(2/(2/((sqrt(3)-1)+0.5)-((sqrt(3)-1)+0.5)/2))}:{0.5+(sqrt(3)-1)});
        \draw ({-(sqrt(3)-1)+(0.5+(sqrt(3)-1))*cos(180-atan(2/(2/((sqrt(3)-1)+0.5)-((sqrt(3)-1)+0.5)/2)))},{(0.5+(sqrt(3)-1))*sin(180-atan(2/(2/((sqrt(3)-1)+0.5)-((sqrt(3)-1)+0.5)/2)))}) arc ({180-atan(2/(2/((sqrt(3)-1)+0.5)-((sqrt(3)-1)+0.5)/2))}:{180+atan(2/(2/((sqrt(3)-1)+0.5)-((sqrt(3)-1)+0.5)/2))}:{0.5+(sqrt(3)-1)});
    
        \draw ({(sqrt(3)-1)+2/((sqrt(3)-1)+0.5)-((sqrt(3)-1)+0.5)/2+(2/((sqrt(3)-1)+0.5)-((sqrt(3)-1)+0.5)/2)*cos(180-atan(2/(2/((sqrt(3)-1)+0.5)-((sqrt(3)-1)+0.5)/2)))},{-2+(2/((sqrt(3)-1)+0.5)-((sqrt(3)-1)+0.5)/2)*sin(180-atan(2/(2/((sqrt(3)-1)+0.5)-((sqrt(3)-1)+0.5)/2)))}) arc ({180-atan(2/(2/((sqrt(3)-1)+0.5)-((sqrt(3)-1)+0.5)/2))}:200:{2/((sqrt(3)-1)+0.5)-((sqrt(3)-1)+0.5)/2});
        \draw ({-((sqrt(3)-1)+2/((sqrt(3)-1)+0.5)-((sqrt(3)-1)+0.5)/2)+(2/((sqrt(3)-1)+0.5)-((sqrt(3)-1)+0.5)/2)*cos(-20)},{-2+(2/((sqrt(3)-1)+0.5)-((sqrt(3)-1)+0.5)/2)*sin(-20)}) arc (-20:{atan(2/(2/((sqrt(3)-1)+0.5)-((sqrt(3)-1)+0.5)/2))}:{2/((sqrt(3)-1)+0.5)-((sqrt(3)-1)+0.5)/2});
        \draw ({(sqrt(3)-1)+2/((sqrt(3)-1)+0.5)-((sqrt(3)-1)+0.5)/2+(2/((sqrt(3)-1)+0.5)-((sqrt(3)-1)+0.5)/2)*cos(180)},{2+(2/((sqrt(3)-1)+0.5)-((sqrt(3)-1)+0.5)/2)*sin(180)}) arc (180:{180+atan(2/(2/((sqrt(3)-1)+0.5)-((sqrt(3)-1)+0.5)/2))}:{2/((sqrt(3)-1)+0.5)-((sqrt(3)-1)+0.5)/2});
        \draw ({-((sqrt(3)-1)+2/((sqrt(3)-1)+0.5)-((sqrt(3)-1)+0.5)/2)+(2/((sqrt(3)-1)+0.5)-((sqrt(3)-1)+0.5)/2)*cos(0)},{2+(2/((sqrt(3)-1)+0.5)-((sqrt(3)-1)+0.5)/2)*sin(0)}) arc (0:{-atan(2/(2/((sqrt(3)-1)+0.5)-((sqrt(3)-1)+0.5)/2))}:{2/((sqrt(3)-1)+0.5)-((sqrt(3)-1)+0.5)/2});
    
        \draw ({sqrt(3)-1},-2) arc (0:-180:{sqrt(3)-1} and 0.2);
        \draw[dashed] ({sqrt(3)-1},-2) arc (0:180:{sqrt(3)-1} and 0.2);
        \draw (0,2) ellipse ({sqrt(3)-1} and 0.2);
    
        \draw [ultra thick, red] ({0.5+2*(sqrt(3)-1)},0) arc (0:-180:{sqrt(3)-1} and 0.2);
        \draw[ultra thick, dashed, red] ({0.5+2*(sqrt(3)-1)},0) arc (0:180:{sqrt(3)-1} and 0.2);
        \draw[ultra thick, red] (0,0) circle (1);
        \draw[ultra thick, red] (-1.9, -0.4) to[out=-65, in={-150-90}] (-150:1.4);
        \draw[ultra thick, red] (-150:1.4) arc (-150:150:1.4);
        \draw[ultra thick, red] (150:1.4) to[out={150+90}, in={150-90}] (150:0.5);
        \draw[ultra thick, red, dashed] (150:0.5) to[out=240, in=115] (-1.9,-0.4);

        \draw (-1.4,0.2) arc (-180:0:1.4) -- (1.4,0.8) arc (0:35:1.4 and 0.673);
        \draw (-1.4,0.2) -- (-1.4,0.8) arc (180:145:1.4 and 0.673);
        \draw[dashed] (1.4,0.8) arc (0:180:1.4 and 0.673);
    \end{tikzpicture}
    \ar[r, leftrightarrow, "\mathrm{II}"]
    &
    \begin{tikzpicture}[thick]
        \draw ({-2*sqrt(3)/2+cos(-60)},{1+sin(-60)}) arc (-60:0:1);
        \draw ({2*sqrt(3)/2+cos(180)},{1+sin(180)}) arc (180:300:1);
        \draw ({cos(-20)},{-2+sin(-20)}) arc (-20:120:1);
        \draw ({2*sqrt(3)+cos(120)},{-2+sin(120)}) arc (120:{180+20}:1);
        
        \draw[ultra thick, red] (0,1) ellipse ({sqrt(3)-1} and 0.2);
        \draw[ultra thick, red, rotate around={120:({cos(210)},{sin(210)})}] ({cos(210)},{sin(210)}) ellipse ({sqrt(3)-1} and 0.2);
        
        \draw[rotate around={150:({cos(330)},{sin(330)})}] ({cos(330)+cos(0)*2},{sin(330)+sin(0)*2}) arc (0:180:2 and 0.2);
        \draw[dashed, rotate around={150:({cos(330)},{sin(330)})}] ({cos(330)+cos(180)*2},{sin(330)+sin(180)*2}) arc (180:360:2 and 0.2);
        
        \draw[ultra thick, red, rotate around={-60:({sqrt(3)+cos(30)},{-1+sin(30)})}] ({sqrt(3)+cos(30)},{-1+sin(30)}) ellipse ({sqrt(3)-1} and 0.2);
        
        \draw[dashed] ({sqrt(3)+cos(270)+cos(0)*(sqrt(3)-1)},{-1+sin(270)+sin(0)*(sqrt(3)-1)}) arc (0:180:{sqrt(3)-1} and 0.2);
        \draw ({sqrt(3)+cos(270)+cos(180)*(sqrt(3)-1)},{-1+sin(270)+sin(180)*(sqrt(3)-1)}) arc (180:360:{sqrt(3)-1} and 0.2);
    \end{tikzpicture}
\end{tikzcd}
        \caption{Proof of Lemma \ref{lem:distance_three}}
        \label{fig:red_curves}
    \end{figure}
    To organize the argument, let $\alpha_1$ and $\alpha_2$ be the two simple closed curves of $P_X \cap Y$, and let $\beta_1$ and $\beta_2$ be the two simple closed curves of $Q_X \cap Y$.
    By the assumption of Lemma \ref{lem:distance_three}, there exists an essential simple closed curve $\gamma \subset Y$ such that
    \[
        i(\alpha_i,\gamma)=1=i(\beta_i,\gamma) \quad\text{for each } i=1,2.
    \]
    
    We first perform an Elementary move II on $(P_X,h_X,X)$ along the subsurface $Y$ so that the curves $\alpha_2$, $\beta_2$, and $\gamma$ become boundary components of the resulting marked surface.
    Denote the resulting pants decomposition by $(P_Z,h_Z,Z)$.
    Similarly, applying an Elementary move II to $(Q_X,h_X,X)$, we obtain another marked surface $(Q_{Z'},h_{Z'},Z')$ in which $\alpha_2$, $\beta_2$, and $\gamma$ are realized as boundary components.
    
    Since the three boundary curves $\alpha_2, \beta_2, \gamma$ determine the same marking data in both constructions, the marked surfaces $(h_Z, Z)$ and $(h_{Z'}, Z')$ are marking-equivalent.
    After identifying them, we may therefore regard both decompositions as lying on the same marked surface and write the second one simply as $(Q_Z,h_Z,Z)$.
    
    Since $i(\alpha_1,\beta_1)=1$, they have representatives contained in a common free basis of $F_n$.
    Thus, the images of $\alpha_1$ and $\beta_1$ in $Z$ intersect twice.
    So these two pants decompositions differ by an Elementary move I.
    Therefore,
    \[
    d_{\Pants_n^+}\bigl([P_X,h_X,X],[Q_X,h_X,X]\bigr)\le 3.
    \]
    This completes the proof.
\end{proof}




In Figure \ref{fig:distance_three}, one can get a common separating curve from both the left and the right configurations by applying an Elementary move I.
Once this curve is fixed, the remaining two curves intersect exactly once.
In particular, in such situations the interchange of two simple closed curves intersecting once determines a path in \(\Pants_n^+\) of length at most five.
In general, we can give a statement as follows.

\begin{proposition} \label{prop:surface_move}
    For each $n \geq 3$, there exists a constant $D = D(n) > 0$ with the following property.
    Let $h_X \colon R_n \to X$ be a homotopy equivalence to an orientable surface $X$.
    Suppose two pants decompositions $(P_X, h_X, X)$ and $(Q_X, h_X, X)$ differ in exactly one curve, with the differing pair having intersection number one as in Figure \ref{fig:surface_move_i}.
    More precisely, suppose the following.
    \begin{enumerate}
        \item There is a separating curve of $P_X \cap Q_X$ that bounds a once-holed torus $Y$;
        \item $P_X - Y = Q_X - Y$; and
        \item $i(P_X \cap Y, Q_X \cap Y) = 1$.
    \end{enumerate}
    Then the distance between the corresponding vertices in $\Pants_n^+$ is at most $D$.
    \begin{figure}[ht]
        \centering
        \begin{tikzcd}
    \begin{tikzpicture}[thick]
        \draw ({-sqrt(3)+cos(60)},{-1+sin(60)}) arc (60:-20:1);
        \draw ({sqrt(3)+cos(120)},{-1+sin(120)}) arc (120:{180+20}:1);
        \draw ({(2*sqrt(3)-1)*cos(-60)},{2+(2*sqrt(3)-1)*sin(-60)}) arc (-60:240:{2*sqrt(3)-1});

        \draw ({(1)*cos(0-30)},{2+(1)*sin(0-30)}) arc ({(0-30)}:{180+30}:{1});
        \draw ({(1+0.2)*cos(180+20)},{2+(1+0.2)*sin(180+20)+sqrt((1+0.2)^2-(1*cos(30))^2)-1*sin(30)}) arc ({180+20}:{360-20}:{1+0.2});
        
        \draw[dashed] ({cos(270)+cos(0)*(sqrt(3)-1)},{sin(270)+sin(0)*(sqrt(3)-1)}) arc (0:180:{sqrt(3)-1} and 0.2);
        \draw ({cos(270)+cos(180)*(sqrt(3)-1)},{sin(270)+sin(180)*(sqrt(3)-1)}) arc (180:360:{sqrt(3)-1} and 0.2);

        \draw ({0.2*cos(-90)}, {2+sqrt(3)+(sqrt(3)-1)*sin(-90)}) arc (-90:90:0.2 and {sqrt(3)-1});
        \draw[dashed] ({0.2*cos(90)}, {2+sqrt(3)+(sqrt(3)-1)*sin(90)}) arc (90:270:0.2 and {sqrt(3)-1});
    \end{tikzpicture}
    \ar[r, dashed, leftrightarrow]
    &
    \begin{tikzpicture}[thick]
        \draw ({-sqrt(3)+cos(60)},{-1+sin(60)}) arc (60:-20:1);
        \draw ({sqrt(3)+cos(120)},{-1+sin(120)}) arc (120:{180+20}:1);
        \draw ({(2*sqrt(3)-1)*cos(-60)},{2+(2*sqrt(3)-1)*sin(-60)}) arc (-60:240:{2*sqrt(3)-1});

        \draw ({(1)*cos(0-30)},{2+(1)*sin(0-30)}) arc ({(0-30)}:{180+30}:{1});
        \draw ({(1+0.2)*cos(180+20)},{2+(1+0.2)*sin(180+20)+sqrt((1+0.2)^2-(1*cos(30))^2)-1*sin(30)}) arc ({180+20}:{360-20}:{1+0.2});
        
        \draw[dashed] ({cos(270)+cos(0)*(sqrt(3)-1)},{sin(270)+sin(0)*(sqrt(3)-1)}) arc (0:180:{sqrt(3)-1} and 0.2);
        \draw ({cos(270)+cos(180)*(sqrt(3)-1)},{sin(270)+sin(180)*(sqrt(3)-1)}) arc (180:360:{sqrt(3)-1} and 0.2);

        \draw (0,2) circle ({sqrt(3)});
    \end{tikzpicture}
\end{tikzcd}
        \caption{Their distance is uniformly bounded.}
        \label{fig:surface_move_i}
    \end{figure}
\end{proposition}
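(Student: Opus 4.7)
The plan is to reduce the general one-curve exchange to the configurations already handled by Elementary move~I and by Lemma~\ref{lem:distance_three}, by standardizing a neighborhood of the two differing curves through a bounded sequence of Elementary moves~II. Let $\gamma\in P_X$ and $\gamma'\in Q_X$ denote the two curves that differ, so that $P_X\setminus\{\gamma\}=Q_X\setminus\{\gamma'\}$, and let $N\subset X$ be the union of the (at most two) pants of $P_X$ adjacent to $\gamma$; equivalently, this is the union of the pants of $Q_X$ adjacent to $\gamma'$, since the complementary pants are shared. Because $X$ is homotopy equivalent to $R_n$, the total complexity of $X$ is bounded in terms of $n$, and consequently the topological type of the pair $(N,\gamma\cup\gamma')$ lies in a finite list depending only on $n$; working inside $\Pants_n^+$ further restricts us to the orientable entries of this list.

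For each such type I would select a uniformly bounded sequence of Elementary moves~II that absorbs or releases adjacent pants, arranging locally so that both $\gamma$ and $\gamma'$ lie inside a standard model subsurface: either a four-holed sphere in which the swap is realized by a single Elementary move~I, or a twice-holed torus in the configuration of Lemma~\ref{lem:distance_three}, in which the swap takes at most three moves. After performing this local exchange, I would reverse the standardizing Elementary moves~II to return $X$ to its original topological structure, now carrying $Q_X$ in place of $P_X$. The total length of the resulting path in $\Pants_n^+$ is bounded by twice the number of standardizing moves plus three.

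The main difficulty is a finite case check: one must verify that, for every topological type of $(N,\gamma\cup\gamma')$, the standardizing Elementary moves~II can be chosen so as to preserve both $\gamma$ and $\gamma'$ as curves of the intermediate pants decompositions, and that the resulting standard model admits the desired local swap. This is precisely the kind of ad hoc picture-chasing that already appears in the proof of Lemma~\ref{lem:distance_three}, but multiplied over the finitely many local types. Since the number of cases and the number of moves in each case depend only on the complexity of $X$---and hence only on $n$---taking the maximum over the list yields the required uniform constant $D(n)$.
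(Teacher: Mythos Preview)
Your plan has a real gap: Elementary move II is not a general ``absorb/release'' operation that can be applied wherever you like. By definition it requires a separating pants curve bounding a four-holed sphere in which one of the two pairs of pants has \emph{two boundary curves of $X$} (or the reverse twice-holed torus configuration). In the situation of the proposition, the regular neighborhood $Y$ of the two differing curves is a once-holed torus whose boundary $\gamma=\partial Y$ lies in $P_X\cap Q_X$, but the pair of pants on the other side of $\gamma$ need not contain any component of $\partial X$ at all. When it does not, no Elementary move II is available at $N$, and applying Elementary move II elsewhere in $X$ does nothing to the local picture at $N$. Your finite list of local types $(N,\gamma\cup\gamma')$ does not see this obstruction, because the obstruction is about the position of $\partial X$ relative to $N$, not about the intrinsic topology of $N$.

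The paper's argument supplies exactly the step you are missing. It does not try to standardize via move II; instead it works in the complementary surface $Z=X\setminus Y$ and uses only Elementary move I's there. The key observation is that $\Pants(Z)/\MCG(Z)$ has finite diameter $D_1$, so at most $D_1$ move-I's suffice to bring a boundary component of $X$ into the pair of pants adjacent to $\gamma$. These moves are applied identically to $P_X$ and to $Q_X$ (they agree on $Z$), and only after this does the configuration of Lemma~\ref{lem:distance_three} become available, giving distance at most five between the standardized decompositions. The total bound is $2D_1+5$. So the essential idea---transporting a component of $\partial X$ next to $\gamma$ through the complement---is what makes the move-II machinery of Lemma~\ref{lem:distance_three} applicable, and it is absent from your outline.
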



\begin{proof}
Let $\gamma$ be the separating curve on the left-hand side of Figure \ref{fig:surface_move_i}. 
Cutting $X$ along $\gamma$, we obtain two subsurfaces $Y$ and $Z$, where $Y$ is the once-holed surface illustrated in Figure \ref{fig:surface_move_i} and $Z$ is the complementary component. 
Let $P_Z$ denote the pants decomposition of $Z$ induced by $P_X$.  

Observe that the quotient of the pants graph $\Pants(Z)$ by the mapping class group $\MCG(Z)$ has finite diameter, denoted by $D_1$, and that every edge in $\Pants(Z)$ which is not a loop is realized by an Elementary move I. 
Hence there exists a sequence of pants decompositions
\[
    P_Z = P_Z^0,\, P_Z^1,\, \dots,\, P_Z^d = P_Z',
\]
with $d \leq D_1$, such that each transition is an Elementary move I and, in $P_Z'$, the curve $\gamma$ together with a boundary component of $Z$ bounds a pair of pants.  

Extending this sequence to $X$, we obtain a sequence
\[
    P_X = P_X^0,\, P_X^1,\, \dots,\, P_X^d = P_X'.
\]
Similarly, we may extend the sequence $P_Z^0, \dots, P_Z^d$ to a sequence of pants decompositions
\[
    Q_X = Q_X^0,\, Q_X^1,\, \dots,\, Q_X^d = Q_X'
\]
of $X$. By the preceding argument, $P_X'$ and $Q_X'$ are at distance at most five in $\Pants_n^+$.  
Therefore, we conclude that there exists a path joining $[P_X,h_X,X]$ and $[Q_X,h_X,X]$ of length at most $2D_1 + 5$.
\end{proof}

\begin{remark} \label{rem:surface_pants_graph}
    For each marked orientable surface \((h_X,X)\), one can define
    \[
        H_X:\Pants(X) \to \Pants_n^+,\quad P_X\mapsto [P_X,h_X,X],
    \]
    with sending edges to a geodesic segment in \(\Pants_n^+\) between the corresponding vertices. By Proposition~\ref{prop:surface_move}, there exist constants \(A\ge 1\) and \(B\ge 0\), depending only on \(n\), such that \(H_X\) is \((A,B)\)–coarsely Lipschitz over all marked orientable surface \((h_X,X)\).
    Unfortunately, it is currently unknown whether \(H_X\) is a quasi–isometric embedding.
\end{remark}

In a similar way, one can show that there exists a constant \(D' = D'(n) > 0\) such that if two pants decompositions \((P_X,h_X,X)\) and \((Q_X,h_X,X)\) differ by a single one-sided curve, as illustrated in Figure \ref{fig:one_sided_move}, then the corresponding vertices in \(\Pants_n\) are at distance at most \(D'\).
We leave the precise proof of this statement as an exercise for the reader. 
\emph{Hint:} it suffices to construct a path between these two vertices that contains two consecutive edges associated with an Elementary move IV.

\begin{figure}[ht]
    \centering
    \begin{tikzcd}
    \begin{tikzpicture}[thick]
        \draw ({cos(40)},{-2+sin(40)}) arc (40:140:1);
        \draw ({-sqrt(3)+cos(0)},{1+sin(0)}) arc (0:-80:1);
        \draw ({sqrt(3)+cos(180)},{1+sin(180)}) arc (180:260:1);

        \draw[ultra thick] (0,1) ellipse ({sqrt(3)-1} and 0.2);
        \draw ({0+(sqrt(3)-1)*cos(45)},{1+0.2*sin(45)}) -- ({0+(sqrt(3)-1)*cos(225)},{1+0.2*sin(225)});
        \draw ({0+(sqrt(3)-1)*cos(135)},{1+0.2*sin(135)}) -- ({0+(sqrt(3)-1)*cos(315)},{1+0.2*sin(315)});
        
        \draw[dashed, rotate around={60:(0,0)}] ({cos(270)+cos(0)*(sqrt(3)-1)},{sin(270)+sin(0)*(sqrt(3)-1)}) arc (0:180:{sqrt(3)-1} and 0.2);
        \draw[rotate around={60:(0,0)}] ({cos(270)+cos(180)*(sqrt(3)-1)},{sin(270)+sin(180)*(sqrt(3)-1)}) arc (180:360:{sqrt(3)-1} and 0.2);

        \draw[dashed, rotate around={-60:(0,0)}] ({cos(270)+cos(0)*(sqrt(3)-1)},{sin(270)+sin(0)*(sqrt(3)-1)}) arc (0:180:{sqrt(3)-1} and 0.2);
        \draw[rotate around={-60:(0,0)}] ({cos(270)+cos(180)*(sqrt(3)-1)},{sin(270)+sin(180)*(sqrt(3)-1)}) arc (180:360:{sqrt(3)-1} and 0.2);
    \end{tikzpicture}
    \ar[r, dashed, leftrightarrow]
    &
    \begin{tikzpicture}[thick]
        \draw ({cos(40)},{-2+sin(40)}) arc (40:140:1);
        \draw ({-sqrt(3)+cos(0)},{1+sin(0)}) arc (0:-80:1);
        \draw ({sqrt(3)+cos(180)},{1+sin(180)}) arc (180:260:1);

        \draw (0,1) ellipse ({sqrt(3)-1} and 0.2);
        \draw ({0+(sqrt(3)-1)*cos(45)},{1+0.2*sin(45)}) -- ({0+(sqrt(3)-1)*cos(225)},{1+0.2*sin(225)});
        \draw ({0+(sqrt(3)-1)*cos(135)},{1+0.2*sin(135)}) -- ({0+(sqrt(3)-1)*cos(315)},{1+0.2*sin(315)});

        \draw ({0+(sqrt(3)-1)*cos(260)},{1+0.2*sin(260)}) to[out=270, in=180, looseness=0.5] (0,-1);
        \draw[dashed] ({0+(sqrt(3)-1)*cos(80)},{1+0.2*sin(80)}) to[out=270, in=0, looseness=0.5] (0,-1);
        
        \draw[dashed, rotate around={60:(0,0)}] ({cos(270)+cos(0)*(sqrt(3)-1)},{sin(270)+sin(0)*(sqrt(3)-1)}) arc (0:180:{sqrt(3)-1} and 0.2);
        \draw[rotate around={60:(0,0)}] ({cos(270)+cos(180)*(sqrt(3)-1)},{sin(270)+sin(180)*(sqrt(3)-1)}) arc (180:360:{sqrt(3)-1} and 0.2);

        \draw[dashed, rotate around={-60:(0,0)}] ({cos(270)+cos(0)*(sqrt(3)-1)},{sin(270)+sin(0)*(sqrt(3)-1)}) arc (0:180:{sqrt(3)-1} and 0.2);
        \draw[rotate around={-60:(0,0)}] ({cos(270)+cos(180)*(sqrt(3)-1)},{sin(270)+sin(180)*(sqrt(3)-1)}) arc (180:360:{sqrt(3)-1} and 0.2);
    \end{tikzpicture}
\end{tikzcd}
    \caption{Their distance is uniformly bounded.}
    \label{fig:one_sided_move}
\end{figure}

From this exercise, one deduces that for each marked surface \((h_X,X)\), the map  
\[
\Pants(X) \rightarrow \Pants_n, \quad P_X \mapsto [P_X,h_X,X],
\]  
extended so that each edge is mapped to a geodesic segment, is coarsely Lipschitz. 
As a corollary, the image of \(\Pants(X)\) in \(\Pants_n\) is connected for every marked surface \((h_X,X)\). 
Consequently, we obtain the following result.


\begin{lemma} \label{lem:quotient}
    For each $n \geq 3$, the quotient $\Pants_n / \Out(F_n)$ is finite and connected.
\end{lemma}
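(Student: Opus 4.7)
The plan is to identify the $\Out(F_n)$-orbits on vertices, deduce finiteness from the finite classification of surface types and of pants-decomposition types within each surface, and establish connectedness by combining intra-surface connectedness (already essentially provided by Remark \ref{rem:surface_pants_graph} and its non-orientable analogue) with inter-surface transitions furnished by Elementary moves II and IV.

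First I would verify that two marked pants decompositions $[P_X, h_X, X]$ and $[P_Y, h_Y, Y]$ lie in the same $\Out(F_n)$-orbit if and only if there is a homeomorphism $g:X\to Y$ sending $P_X$ to $P_Y$: given such a $g$, the composition $h_Y^{-1}\circ g\circ h_X$ is a homotopy self-equivalence of $R_n$ representing an element $\phi\in\Out(F_n)$ with $\phi.[P_X,h_X,X]=[P_Y,h_Y,Y]$, while the converse is immediate from the definition of the action. Hence the vertices of $\Pants_n/\Out(F_n)$ are in bijection with pairs $(X,[P_X])$, where $X$ ranges over homeomorphism types of compact surfaces with $\pi_1(X)\cong F_n$ and $[P_X]$ is an $\MCG(X)$-orbit of pants decompositions. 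Finiteness then follows from two classical facts: only finitely many such $X$ exist, parametrized by $(g,b)$ with $2g+b-1=n$ in the orientable case and by $(c,b)$ with $c+b-1=n$ in the non-orientable case, and on each fixed $X$ the set $\Pants(X)/\MCG(X)$ is finite, being enumerated by finitely many decorated dual graphs. Since every Elementary move admits only finitely many local outcomes, each vertex of $\Pants_n$ has finite valence, so the quotient graph has finitely many edges as well.

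For connectedness, the intra-surface piece is immediate from Remark \ref{rem:surface_pants_graph} and its non-orientable analogue in the paragraph that follows: for each marked surface $(h_X,X)$ the coarsely Lipschitz map $\Pants(X)\to\Pants_n$ has connected image, since $\Pants(X)$ itself is connected by Hatcher--Thurston \cite{MR579573} in the orientable case and by Papadopoulos--Penner \cite{MR3460762} in the non-orientable case; passing to the quotient, all vertices with a fixed underlying surface type collapse to a connected subgraph of $\Pants_n/\Out(F_n)$. To bridge distinct surface types I would exploit Elementary moves II and IV directly: move II trades a twice-holed torus subregion for a four-holed sphere subregion, altering the orientable type by $(g,b)\mapsto(g\mp 1,b\pm 2)$, while move IV trades a twice-holed $\mathbb{RP}^2$ subregion for a pair of pants, altering the non-orientable type by $(c,b)\mapsto(c\mp 1,b\pm 1)$. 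Iterating move IV reduces any non-orientable surface to an orientable one, and iterating move II then collapses the genus to zero, arriving at the $(n+1)$-holed sphere as a common reference. The main obstacle is verifying, at each reduction step, that one can produce a pants decomposition actually containing the required local configuration (a separating curve cutting off a twice-holed torus, respectively a twice-holed $\mathbb{RP}^2$ subregion); for $n\geq 3$ this is a routine surgery exercise on the dual graph of any starting pants decomposition, but it is the only non-formal ingredient in the argument and would need to be spelled out case by case in a full write-up.
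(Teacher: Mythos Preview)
Your approach is essentially the paper's: identify vertex orbits with $\MCG$-orbits of pants decompositions on the finitely many surface types to get finiteness, then use connectivity of each $\Pants(X)$ for the intra-surface part and reduce the genus/crosscap number via moves II and IV to reach the $(n{+}1)$-holed sphere.

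One step fails as written. You claim that ``each vertex of $\Pants_n$ has finite valence'' because each elementary move has finitely many local outcomes. This is false: for Elementary move~I, a fixed curve $\gamma$ in a four-holed sphere can be replaced by any of infinitely many curves $\gamma'$ (the choices are parametrized by the vertices of a Farey graph other than $\gamma$), so every vertex of $\Pants_n$ has infinite valence. Your conclusion that the quotient has finitely many edges is nonetheless correct; the right argument (which is the paper's) is that modulo $\MCG(X)$ there are only finitely many elementary-move outcomes from a given unmarked pants decomposition, and the quotient has finitely many vertices. The rest of your outline matches the paper, including the acknowledgement that the only non-formal step is finding, within a given surface, a pants decomposition containing the local configuration needed to apply move~II or~IV; the paper handles this exactly as you suggest, by invoking connectivity of $\Pants(X)/\MCG(X)$ via moves~I (and~III in the nonorientable case).
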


\begin{proof}
    Each vertex of \(\Pants_n / \Out(F_n)\) corresponds to a pants decomposition of an ``unmarked'' surface. 
    Since the number of such unmarked surfaces is finite and each admits only finitely many pants decompositions, the set of vertices of \(\Pants_n / \Out(F_n)\) is finite.  
    
    Moreover, each pants decomposition consists of finitely many pairs of pants, and therefore finitely many simple closed curves.
    So only finitely many Elementary moves can deform a given pants decomposition (this holds for unmarked surfaces).
    It follows that the number of edges of \(\Pants_n / \Out(F_n)\) is also finite.  
    
    Now choose a base vertex \(P_0 \in \Pants_n\), represented by a pants decomposition of an \(n\)-holed disk. 
    We claim that for every \(P \in \Pants_n\), there exists \(\phi \in \Out(F_n)\) such that there is a path joining \(P\) and \(\phi.P_0\). Let \((P_X,h_X,X)\) be a representative of \(P\).  
    
    If \(X\) is an \(n\)-holed disk, then there exists \(\phi \in \Out(F_n)\) such that both \(\phi.P_0\) and \(P\) are represented by pants decompositions on the same marked surface.
    In this case, a path connecting \(\phi.P_0\) and \(P\), each edge of which is represented by Elementary move I, exists directly.

    If \(X\) is orientable with positive genus, then the quotient of $\Pants(X)$ by the mapping class group is connected via edges corresponding to Elementary move I.
    So there exists a sequence of Elementary move I’s starting from \(P\) such that the resulting vertex is represented by a pants decomposition containing a separating curve that bounds a twice-holed torus, together with two nonseparating curves in that subsurface. 
    From this vertex, one can apply an Elementary move II to obtain adjacency to a vertex corresponding to a surface of strictly lower genus. 
    Iterating this process yields a path from \(P\) to a vertex represented by a pants decomposition of an \(n\)-holed disk. 
    By the previous paragraph, there exists \(\phi \in \Out(F_n)\) such that \(P\) is connected to \(\phi.P_0\) by a path.

    If \(X\) is nonorientable, then there exists a sequence of Elementary move I’s and Elementary move III’s deforming \(P\) to a vertex represented by a pants decomposition whose pair of pants consists of one boundary component and a one-sided curve. 
    From this vertex, one may apply an Elementary move IV to obtain a pants decomposition of strictly lower nonorientable genus. 
    Thus there is a path from \(P\) to a vertex represented by a pants decomposition of lower nonorientable genus. 
    Repeating this procedure inductively, we eventually reach a vertex corresponding to a pants decomposition of an orientable surface. 
    Applying the argument of the previous paragraph, we conclude that there exists \(\phi \in \Out(F_n)\) such that \(P\) is joined to \(\phi.P_0\) by a path in \(\Pants_n\). 
    
    This proves the claim. So every vertex of \(\Pants_n / \Out(F_n)\) is connected by a path to the orbit \(\Out(F_n).P_0\). Hence, \(\Pants_n / \Out(F_n)\) is connected.
\end{proof}

\begin{remark} \label{rem:orientable_pants_quotients}
The middle part of the proof of Lemma \ref{lem:quotient} shows that the quotient \(\Pants_n^+ / \Out(F_n)\) is connected.
Furthermore, since \(\Pants_n^+\) is a subgraph of \(\Pants_n\), the quotient \(\Pants_n^+ / \Out(F_n)\) is also finite.
\end{remark}

We now observe the following fact.

\begin{theorem}
    For each $n \geq 3$, the inclusion map $\Pants_n^+ \hookrightarrow \Pants_n$ is coarsely surjective.
    In particular, we have $\Pants_n^+ \succ \Pants_n$.
\end{theorem}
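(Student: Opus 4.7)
The plan is to prove coarse surjectivity using the finiteness of the quotient established in Lemma~\ref{lem:quotient}, combined with the $\Out(F_n)$-invariance of $\Pants_n^+$ noted in Remark~\ref{rem:orientable_pants_quotients}. The coarsely Lipschitz part is essentially free: since $\Pants_n^+$ is an induced subgraph of $\Pants_n$ equipped with the combinatorial metric, the inclusion is automatically $1$-Lipschitz (distances in $\Pants_n$ can only be shorter than distances in $\Pants_n^+$). So the content of the theorem lies entirely in exhibiting a uniform bound on the distance in $\Pants_n$ from an arbitrary vertex to the subgraph $\Pants_n^+$.

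First, by Lemma~\ref{lem:quotient}, the quotient $\Pants_n / \Out(F_n)$ is finite; choose orbit representatives $P_1, \dots, P_k \in \Pants_n$. For each $P_i$, I would invoke the construction from the nonorientable case in the proof of Lemma~\ref{lem:quotient}: if $P_i$ is represented by a pants decomposition $(P_X, h_X, X)$ with $X$ nonorientable, then a finite sequence of Elementary moves I and III puts $P_i$ in a position where a pair of pants consists of a boundary curve together with a one-sided curve, at which point an Elementary move IV strictly decreases the nonorientable genus. Iterating finitely many times produces a path in $\Pants_n$ from $P_i$ to a vertex $Q_i$ represented by a pants decomposition of an orientable surface, i.e., $Q_i \in \Pants_n^+$. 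If $P_i$ is already orientable, take $Q_i = P_i$. Let $C$ be the maximum over $i$ of the length of these paths.

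Now, given an arbitrary $P \in \Pants_n$, write $P = \phi.P_i$ for some $\phi \in \Out(F_n)$ and some $i$. Since $\Out(F_n)$ acts on $\Pants_n$ by simplicial automorphisms, applying $\phi$ to the path from $P_i$ to $Q_i$ produces a path in $\Pants_n$ of the same length connecting $P$ to $\phi.Q_i$. By the $\Out(F_n)$-invariance of $\Pants_n^+$ explained in the text (each Elementary move of type I or II preserves orientability of the underlying surface), we have $\phi.Q_i \in \Pants_n^+$. Hence every vertex of $\Pants_n$ lies within distance $C$ of $\Pants_n^+$, which is precisely the statement that the inclusion is $C$-coarsely surjective.

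The only conceivable obstacle is verifying that the reduction procedure from the nonorientable case of Lemma~\ref{lem:quotient} genuinely terminates in finitely many steps with a uniform bound depending only on the orbit. But this is immediate once the orbit representatives are fixed: each $P_i$ has a specific underlying (unmarked) surface with a specific nonorientable genus, and the reduction decreases this invariant strictly at each Elementary move IV step, while the intermediate moves I and III at each stage come from a finite set of moves on a fixed unmarked surface. Thus the bound $C$ is well-defined and finite, completing the proof.
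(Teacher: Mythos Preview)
Your proposal is correct and takes essentially the same approach as the paper. The paper's proof is slightly more compressed: it simply observes that $\Pants_n$ lies in the $D$-neighborhood of $\Pants_n^+$ where $D$ is the diameter of the finite connected quotient $\Pants_n / \Out(F_n)$, whereas you unpack this by choosing orbit representatives and explicit paths to $\Pants_n^+$; both arguments rest on the same ingredients (finiteness of the quotient from Lemma~\ref{lem:quotient}, $\Out(F_n)$-invariance of $\Pants_n^+$, and the nonorientable-to-orientable reduction).
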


\begin{proof}
    By Lemma \ref{lem:quotient}, the pants graph \(\Pants_n\) lies within the $D$--neighborhood of \(\Pants_n^+\) where $D$ stands for the diameter of \(\Pants_n / \Out(F_n)\).
    So the inclusion is coarsely surjective.
    Moreover, the inclusion \(\Pants_n^+ \hookrightarrow \Pants_n\) is clearly a \((1,0)\)--coarsely Lipschitz map.
    Therefore, we obtain $\Pants_n^+ \succ \Pants_n$.
\end{proof}

\section{Pants Graphs and Free Splitting Complexes}


Assume $n \geq 3$.  
We say a metric tree equipped with an isometric action of $F_n$ is an \emph{$F_n$-tree}.  
Two $F_n$-trees $T$ and $T'$ are said to be \emph{equivalent} if there exists an isometry $g: T \to T'$ such that $g(w.x) = w.g(x)$ for every $w \in F_n$ and $x \in T$.  
We say an $F_n$-tree $T'$ is a \emph{refinement} of an $F_n$-tree $T$ if there exists an $F_n$--equivariant surjective map from $T'$ to $T$.  
In this case, $T'$ is a refinement of all trees equivalent to $T$, so we can write $[T'] > [T]$.

An $F_n$-tree is called a \emph{free splitting} if every edge stabilizer is trivial.  
The \emph{free splitting complex} $\FS_n$ is constructed as follows.  
The vertices of $\FS_n$ are all equivalence classes of free splittings.  
A collection of $k+1$ vertices $t_0, \dots, t_k$ of $\FS_n$ form a $k$-simplex if $t_0 < t_1 < \dots < t_k$.




Let $h_X: R_n \to X$ be a homotopy equivalence and let $\alpha \subset X$ be an essential arc. 
Homotope $X$ by shrinking $\alpha$ to a point, so that the two boundary points of $\alpha$ become identified.
By van Kampen's theorem, this operation yields a free splitting of $F_n$.
Equivalently, this splitting may be viewed as the dual tree to the preimage of $\alpha$ in the universal cover of $X$.
We refer to this free splitting, denoted $T_\alpha$, as the \emph{free splitting corresponding to $\alpha$}.

\begin{remark}
    For each marked surface $(h_X, X)$, the barycentric subdivision of the arc complex of $X$ is embedded into the free splitting complex via $\alpha \mapsto T_\alpha$.
\end{remark}



For each vertex $P = [P_X, h_X, X]$ of $\Pants_n$, let us define $\Psi(P_X, h_X, X)$ by the free splitting corresponding to $\alpha$ for some arc $\alpha$ disjoint from $P_X$.
Note that $\Psi(P_X, h_X, X)$ is not well-defined because a different choice of an arc in $X$ induces a different free splitting.
Nevertheless, $\Psi(P_X, h_X, X)$ is coarsely well-defined.
That is, if $\alpha'$ is another arc disjoint from $P_X$, the free splitting $T$ corresponding to $\alpha'$ is still close to $\Psi(P_X, h_X, X)$ in $\FS(F_n)$.

The precise proof of the above assertion is the following.
If $\alpha$ and $\alpha'$ are disjoint, then $\Psi(P_X, h_X, X)$ and $T$ have a common refinement, so $d_\FS(\Psi(P_X, h_X, X), T)$ is at most two.  
Otherwise, $\alpha$ and $\alpha'$ are contained in the same pair of pants in $P_X$.  
If the endpoints of $\alpha$ and $\alpha'$ are contained in the same boundary curve, then the free splittings corresponding to $\alpha$ and $\alpha'$ are equal.  
If $\alpha$ and $\alpha'$ intersect, they are still contained in the same pair of pants but with their endpoints on different boundary curves.  
In this case, the distance between their corresponding free splittings is at most six (because the distance in the arc complex is three).  
Therefore, the set of all free splittings corresponding to the arcs disjoint from $(P_X, h_X, X)$ has diameter at most six.

Define a map $\Psi_n: \Pants_n \to \FS_n$ by $\Psi_n(P) = \Psi(P_X, h_X, X)$ for each $P = [P_X, h_X, X]$, and extend it by sending each edge to a geodesic.

\begin{theorem} \label{thm:freelip}
    
    For each $n \geq 3$, the map $\Psi_n$ is $(16, 0)$--coarsely Lipschitz with 
    \[
        d_\FS(\Psi_n(\phi.P), \phi.\Psi_n(P)) \leq 6
    \]
    for all $P \in \Pants_n$ and $\phi \in \Out(F_n)$.
\end{theorem}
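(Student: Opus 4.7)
The plan is to treat both inequalities in parallel by exploiting the coarse well--definedness of $\Psi_n$ discussed just above the theorem: any two arcs disjoint from a fixed marked pants decomposition $(P_X, h_X, X)$ yield free splittings of $F_n$ lying within distance $6$ in $\FS_n$.

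I would first handle the equivariance inequality. By definition $\phi.P = [P_X, h_X \circ f^{-1}, X]$ for any representative $f$ of $\phi$. If $\alpha \subset X$ is an arc disjoint from $P_X$ with $\Psi_n(P) = T_\alpha$, then $\alpha$ is still disjoint from $P_X$ in the representative of $\phi.P$, and the free splitting it determines under the new marking $h_X \circ f^{-1}$ is precisely $\phi.T_\alpha$. Applying coarse well--definedness to the pair $(\phi.P, \alpha)$ immediately yields $d_\FS(\Psi_n(\phi.P), \phi.\Psi_n(P)) \leq 6$.

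For the $(16,0)$--Lipschitz bound, since $\Psi_n$ sends each edge of $\Pants_n$ to a geodesic in $\FS_n$, it suffices to show that $d_\FS(\Psi_n(P), \Psi_n(Q)) \leq 16$ for adjacent vertices $P, Q$. By two applications of coarse well--definedness and the triangle inequality, this reduces to producing, for each of the four elementary moves relating representatives $(P_X, h_X, X)$ of $P$ and $(Q_Y, h_Y, Y)$ of $Q$, an arc $\alpha \subset X$ disjoint from $P_X$ and an arc $\alpha' \subset Y$ disjoint from $Q_Y$ with $d_\FS(T_\alpha, T_{\alpha'}) \leq 4$; the combined estimate is then $6 + 4 + 6 = 16$.

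The key feature of each elementary move is that it is supported on a small localized subsurface. For Move I the ambient surface is unchanged and the modification happens inside a four--holed sphere; for $n \geq 3$ with this subsurface proper, choosing $\alpha = \alpha'$ to be an essential arc in its complement forces $d_\FS(T_\alpha, T_{\alpha'}) = 0$. Move III is similar, with a single pair of pants as the support. For Moves II and IV the homotopy equivalence $\eta : X \to Y$ is explicitly constructed to be the identity outside the modified region, so any arc $\alpha$ supported in the common part defines the same $F_n$--tree under both markings $h_X$ and $h_Y = \eta \circ h_X$; again the distance is $0$.

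The principal obstacle is the small collection of low--complexity cases in which the modified region exhausts $X$, such as Move I applied when $X$ itself is a four--holed sphere, or Move II applied to a four--holed sphere or twice--holed torus. In these cases one cannot place an arc ``outside'' the modified region, and one must argue directly within the arc complex of $X$. Here I would use the explicit combinatorial structure of the arcs disjoint from the relevant curves---for instance, in the four--holed sphere case, any two such arcs can be arranged to be disjoint, placing them at distance at most $1$ in the arc complex---combined with the coarsely Lipschitz inclusion of the arc complex into $\FS_n$ described in the remark above, to recover the required bound $d_\FS(T_\alpha, T_{\alpha'}) \leq 4$.
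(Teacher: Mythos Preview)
Your overall strategy—the $6+4+6=16$ decomposition and the coarse-equivariance argument—is correct and matches the paper. The gap is in your case analysis for the individual moves. You assume that whenever the modified region is a proper subsurface of $X$, its complement $Z$ contains a component of $\partial X$ in which a common arc can be placed; the only exceptions you flag are those where the modified region exhausts $X$. But for Move II, the four-holed sphere $A$ bounded by $\gamma$ always carries three components of $\partial X$, so whenever $X$ has exactly three boundary components (e.g.\ $X=\Sigma_{1,3}$ with $n=4$) the complement $Z$ is a once-holed torus meeting $\partial X$ nowhere and hence supporting no essential arc. The same obstruction occurs for Move IV already at $n=3$ (take $X=\Sigma_{1,2}$ with the outer pair of pants containing both boundary circles), and indeed also for Move I whenever all of $\partial X$ happens to lie in the supporting four-holed sphere. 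For Move I your arc-complex fallback still rescues the argument since $X=Y$, but for Moves II and IV the surfaces $X$ and $Y$ are genuinely different, so there is no common arc complex to work in, and your proposal supplies no mechanism for comparing the two splittings.

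The paper handles Move II by a different device: rather than seeking an arc in the complement, it takes the specific arc $\alpha_P'\subset X$ collapsed by the move and the arc $\alpha_Q'\subset Y$ collapsed by its inverse, identifies the corresponding free splittings explicitly as $A*\langle b\rangle$ and $\langle A,\,bab^{-1}\rangle*_b$, and checks directly that these lie at distance at most $4$ in $\FS_n$. A similar direct comparison handles Move IV. This algebraic step, bypassing any arc-complex argument, is what your proof is missing.
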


\begin{proof}
    Let $P$ and $Q$ be adjacent vertices of $\Pants_n$.
    Suppose $Q$ is obtained from $P$ by an Elementary move I.  
    Write $P = [P_X, h_X, X]$ and $Q = [Q_X, h_X, X]$.  
    Let $\alpha_P$ and $\alpha_Q$ be arcs of $X$ such that $\Psi_n(P)$ and $\Psi_n(Q)$ are the free splittings corresponding to $\alpha_P$ and $\alpha_Q$, respectively.  
    If $\alpha_P$ and $\alpha_Q$ do not fill $X$, then the distance between $\alpha_P$ and $\alpha_Q$ in the arc complex is at most two; in this case, we have $d_\FS(\Psi_n(P), \Psi_n(Q)) \leq 4$.  
    Assume $\alpha_P$ and $\alpha_Q$ fill $X$.  
    Then $P$ and $Q$ contain curves that fill $X$, which implies $X$ is homeomorphic to a four-holed sphere.  
    Note that $\alpha_P$ and $\alpha_Q$ intersect twice.  
    Thus, $\alpha_P$ and $\alpha_Q$ are at distance at most two in the arc complex, so we have $d_\FS(\Psi_n(P), \Psi_n(Q)) \leq 4$.

    Suppose $Q$ is obtained from $P$ by an Elementary move II.  
    Without loss of generality, assume that the number of boundary curves of $Q$ is less than that of $P$.  
    Write $Q = [Q_Y, h_Y, Y]$.  
    Let $\alpha_P' \subset X$ be the arc collapsed in the move, and let $\alpha_Q' \subset Y$ be the arc collapsed in the inverse move.
    The free splittings corresponding to $\alpha_P'$ and $\alpha_Q'$ are of the form $A * \langle b \rangle$ and $\langle A, bab^{-1} \rangle *_b$, respectively.  
    The distance between these free splittings is at most four.  
    Since $\alpha_P'$ is disjoint from $P_X$, the free splittings corresponding to $\alpha_P$ and $\alpha_P'$ have distance $\leq 6$.  
    Similarly, the free splittings corresponding to $\alpha_Q$ and $\alpha_Q'$ have distance at most six.  
    Thus, the distance between $\Psi_n(P)$ and $\Psi_n(Q)$ is at most sixteen.

    Suppose that $Q$ is obtained from $P$ by an Elementary move III. 
    Then the arc $\alpha_Q$ is disjoint from the pants decomposition $P_X$. 
    Likewise, $\alpha_P$ is disjoint from $Q_X$.
    It follows that the vertices $\Psi_n(P)$ and $\Psi_n(Q)$ lie at distance at most six.
    
    Suppose that $Q$ is obtained from $P$ by an Elementary move IV. 
    Then there exist arcs $\alpha_P'' \subset X$ and $\alpha_Q'' \subset Y$ such that the associated free splittings $T_{\alpha_P''}$ and $T_{\alpha_Q''}$ are equivalent. 
    Consequently, the vertices $\Psi_n(P)$ and $\Psi_n(Q)$ lie at distance at most twelve.

    For a geodesic sequence $P_0, \dots, P_k$ in $\Pants_n$, we have
    \[
        d_\FS(\Psi_n(P_0), \Psi_n(P_k)) \leq \sum_{i=1}^k d_\FS(\Psi_n(P_{i-1}), \Psi_n(P_i)) \leq 16k = 16\, d_\Pants(P_0, P_k).
    \]
    Therefore, $\Psi_n$ is a $(16, 0)$--coarsely Lipschitz map.


    We now verify that \(\Psi_n\) is coarsely \(\Out(F_n)\)--equivariant.
    Let \(P = [P_X, h_X, X] \in \Pants_n\) and let \(\phi \in \Out(F_n)\) with representative \(f : R_n \to R_n\).
    Let \(\alpha_P\) be an arc such that the corresponding free splitting is \(\Psi_n(P)=T_{\alpha_P}\).
    Then \(\phi.\Psi_n(P)\) is the free splitting corresponding to \(\alpha_P\) of the marked surface \((h_X \circ f^{-1},X)\).
    
    On the other hand, the vertex \(\Psi_n(P)\) arises from the free splitting corresponding to some arc that is disjoint from the pants decomposition \((P_X, h_X \circ f^{-1}, X)\).
    By the preceding discussion, these two splittings lie at distance at most six.
    Therefore \(\Psi_n\) is \(\Out(F_n)\)--equivariant up to a uniformly bounded error, i.e.\ it is coarsely \(\Out(F_n)\)--equivariant.
\end{proof}


Note that the number of $\Out(F_n)$--orbits of vertices of $\mathcal{FS}_n$ is finite.
On the other hand, every one-edge free splittings are contained in the six--neighborhood of the image of $\Psi_n$.
Combining these two facts, we can conclude the following.

\begin{corollary} \label{cor:coarse_surj}
    For each $n \geq 3$, the map $\Psi_n$ is coarsely surjective.
    In particular, we have $\Pants_n \succ \FS_n$.
\end{corollary}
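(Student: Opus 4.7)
The plan is to show that $\Psi_n$ is $K$--coarsely surjective for some constant $K$ depending only on $n$; the ``in particular'' clause then follows immediately from the $(16,0)$--coarsely Lipschitz property established in Theorem~\ref{thm:freelip}.

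First, I would reduce the problem to one-edge free splittings. Given any vertex $T \in \FS_n$, collapse all but one edge of the quotient graph of groups of $T$ to obtain a one-edge free splitting $T_0$. Then $T > T_0$, so $\{T, T_0\}$ is a $1$--simplex of $\FS_n$ and $d_\FS(T, T_0) \leq 1$. Hence it suffices to prove that the set of one-edge free splittings lies in a uniform neighborhood of $\Psi_n(\Pants_n)$.

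Next, I would show that every one-edge free splitting is within distance six of the image. Any one-edge free splitting is either of separating type $F_k \ast F_{n-k}$ or of HNN type $F_{n-1}\,\ast$. In either case, one can choose a compact surface $X$ homotopy equivalent to $R_n$, a marking $h_X: R_n \to X$, and an essential arc $\alpha \subset X$ such that the splitting $T_\alpha$ is equivalent to the given one. One can then extend $\alpha$ to a pants decomposition $P_X$ of $X$ disjoint from $\alpha$, since $\alpha$ can always be placed inside a single pair of pants of some pants decomposition. Now $\Psi_n([P_X, h_X, X])$ is by definition the free splitting associated to \emph{some} arc disjoint from $P_X$, and the argument given in the proof of Theorem~\ref{thm:freelip} shows that any two free splittings arising from arcs disjoint from the same pants decomposition are at distance at most six in $\FS_n$. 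Hence $d_\FS(T_0, \Psi_n([P_X, h_X, X])) \leq 6$.

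Combining the two reductions, every vertex of $\FS_n$ lies within distance seven of $\Psi_n(\Pants_n)$, so $\Psi_n$ is $7$--coarsely surjective, which together with Theorem~\ref{thm:freelip} yields $\Pants_n \succ \FS_n$. The main obstacle is producing, uniformly in the one-edge splitting, a marked surface and arc realizing it together with a compatible pants decomposition. The hint to use the finiteness of $\Out(F_n)$--orbits of vertices of $\FS_n$ suggests a cleaner variant: verify the bound on finitely many orbit representatives of one-edge splittings, then propagate the bound to the full orbit via the coarse $\Out(F_n)$--equivariance of $\Psi_n$ established in Theorem~\ref{thm:freelip}, at the cost of enlarging the constant by at most another six.
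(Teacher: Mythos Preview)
Your argument is correct and essentially matches the paper's brief justification preceding the corollary, which likewise rests on the fact that every one-edge free splitting lies in the $6$--neighborhood of $\Psi_n(\Pants_n)$. Your collapsing step reducing an arbitrary vertex of $\FS_n$ to a one-edge splitting is a clean addition that renders the paper's appeal to finiteness of $\Out(F_n)$--orbits unnecessary and yields the explicit constant $7$, while your final paragraph also recovers the paper's intended route via coarse equivariance.
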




Handel and Mosher \cite[Theorem 1.1(1)]{MR4009387} characterized the loxodromic elements of $\Out(F_n)$ in terms of attracting laminations: an element of $\Out(F_n)$ is loxodromic in the free splitting complex if and only if it has a filling attracting lamination.  
Theorem \ref{thm:freelip} gives the following result.



\begin{corollary}
    An element of $\Out(F_n)$ has an unbounded orbit in both $\Pants_n$ and $\Pants_n^+$ if it has a filling attracting lamination.
\end{corollary}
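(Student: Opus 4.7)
The plan is to push the loxodromic dynamics on \(\FS_n\) back to \(\Pants_n\) along the map \(\Psi_n\) from Theorem~\ref{thm:freelip}. By the Handel--Mosher criterion quoted just before the corollary, an element \(\phi\in \Out(F_n)\) with a filling attracting lamination acts loxodromically on \(\FS_n\), with some asymptotic translation length \(\tau > 0\). Fix any \(P\in\Pants_n^+\subseteq\Pants_n\); the goal is to show that the \(\phi\)-orbit of \(P\) is unbounded in both graphs.

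The key step is to iterate the coarse equivariance \(d_\FS(\Psi_n(\phi.Q),\phi.\Psi_n(Q))\leq 6\) of Theorem~\ref{thm:freelip}. Since \(\phi\) acts by isometries on \(\FS_n\), a telescoping triangle inequality along \(P,\phi.P,\dots,\phi^k.P\) gives
\[
    d_\FS(\Psi_n(\phi^k.P),\, \phi^k.\Psi_n(P)) \leq 6k.
\]
Combined with the loxodromic lower bound \(d_\FS(\phi^k.\Psi_n(P),\Psi_n(P)) \geq \tau k - C\) (for some constant \(C\)) and the \((16,0)\)--coarse Lipschitz property of \(\Psi_n\), this yields
\[
    d_\Pants(P,\phi^k.P) \;\geq\; \frac{(\tau-6)k - C}{16}.
\]

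The main obstacle I anticipate is that \(\tau\) need not exceed the equivariance constant \(6\), in which case the displayed lower bound is useless. To bypass this, I would replace \(\phi\) by a sufficiently high power \(\phi^m\): its asymptotic translation length on \(\FS_n\) equals \(m\tau\), and the coarse equivariance constant \(6\) in Theorem~\ref{thm:freelip} is uniform in the outer automorphism, so applying the above argument to \(\phi^m\) with \(m\tau>6\) gives \(d_\Pants(P,\phi^{mk}.P)\to\infty\) as \(k\to\infty\). Since \(\{\phi^{mk}.P\}_k\) lies in the \(\phi\)-orbit of \(P\), that orbit is unbounded in \(\Pants_n\). Finally, \(\Pants_n^+\) is an \(\Out(F_n)\)-invariant induced subgraph of \(\Pants_n\), so the entire orbit remains in \(\Pants_n^+\) and distances only increase when measured there; the unboundedness in \(\Pants_n\) therefore transfers immediately to \(\Pants_n^+\).
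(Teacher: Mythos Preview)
Your argument is correct and follows the same overall strategy as the paper: pull back the loxodromic displacement on $\FS_n$ through the coarsely Lipschitz, coarsely equivariant map $\Psi_n$. However, you manufacture an obstacle that is not there. The coarse equivariance bound of Theorem~\ref{thm:freelip} holds for \emph{every} element of $\Out(F_n)$, in particular for $\phi^m$; hence
\[
d_\FS\bigl(\Psi_n(\phi^m.P),\,\phi^m.\Psi_n(P)\bigr)\leq 6
\]
directly, with no dependence on $m$. There is no need to telescope to a bound of $6k$, and therefore no need to compare $\tau$ with $6$ or to pass to a high power. The paper simply writes
\[
16\,d_\Pants(P,\phi^m.P)\;\geq\; d_\FS\bigl(\Psi_n(P),\Psi_n(\phi^m.P)\bigr)\;\geq\; d_\FS\bigl(\Psi_n(P),\phi^m.\Psi_n(P)\bigr)-6,
\]
and the right-hand side tends to infinity with $m$. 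You even state the relevant fact (``the coarse equivariance constant $6$ \dots\ is uniform in the outer automorphism'') but then apply it only to $\phi$ rather than to $\phi^m$ itself. Your handling of $\Pants_n^+$ via the induced-subgraph inequality is correct and, in fact, more explicit than what the paper writes.
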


\begin{proof}
    Let $\phi \in \Out(F_n)$ be an element that has a filling attracting lamination.
    By Handel and Mosher \cite{MR4009387}, $\phi$ acts loxodromically on $\FS_n$, that is,
    \[
        \lim_{m \to \infty}\frac{d_\FS(T, \phi^m.T)}{m} > 0
    \]
    for some $T \in \FS_n$.
    By Corollary \ref{cor:coarse_surj}, the intersection of $\Psi_n(\Pants_n)$ and the $C$--neighborhood of $T$ is nonemtpy for some $C = C(n)$.
    From this intersection, take an element $T'$ and choose $P$ such that $\Psi_n(P) = T'$.
    Then we have
    \begin{align*}
        d_\Pants(P, \phi^m.P) &\geq \frac{1}{16}\, d_\FS(\Psi_n(P), \Psi_n(\phi^m.P)) \\
        &\geq \frac{1}{16}(d_\FS(T, \phi^m.T) - d_\FS(T, \Psi_n(P)) - d_\FS(\Psi_n(\phi^m.P), \phi^m.\Psi_n(P)) \\
        &\quad\, - d_\FS(\phi^m.\Psi_n(P), \phi^m.T)) \\
        &\geq \frac{1}{16}(d_\FS(T, \phi^m.T) - 2C - 6) \\
        &= \frac{1}{16}\, d_\FS(T, \phi^m.T) - \frac{C+3}{8},
    \end{align*}
    which tends to infinity as $m \to \infty$.
    Hence, $\phi$ has an unbounded orbit in $\Pants_n$.
\end{proof}

The fact that free splitting complexes are unbounded implies the following.

\begin{corollary}
    For each $n \geq 2$, both $\Pants_n$ and $\Pants_n^+$ are unbounded.
\end{corollary}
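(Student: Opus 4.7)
The plan is to deduce unboundedness from the machinery already established in the paper, handling the cases $n \geq 3$ and $n = 2$ separately.

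For $n \geq 3$, I would first argue that $\Pants_n$ is unbounded. The key tools are the map $\Psi_n \colon \Pants_n \to \FS_n$ from Theorem \ref{thm:freelip}, which is $(16,0)$--coarsely Lipschitz, together with its coarse surjectivity proved in Corollary \ref{cor:coarse_surj}. Since $\FS_n$ is unbounded (as noted in the paragraph preceding the corollary, this follows from the existence of loxodromic elements given by \cite{MR3073931, MR4009387}, or equivalently from the preceding corollary producing an element of $\Out(F_n)$ with unbounded $\Pants_n$--orbit), an elementary contradiction argument finishes the job: if $\Pants_n$ had finite diameter $D$, its image under $\Psi_n$ would have diameter at most $16D$, and together with a fixed coarse surjectivity constant this would force $\FS_n$ to be bounded.

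Next, I would handle $\Pants_n^+$ for $n \geq 3$. The immediately preceding theorem shows $\Pants_n^+ \succ \Pants_n$ via the inclusion, which is in particular coarsely surjective with some constant $D'$. Given $N > 0$, unboundedness of $\Pants_n$ produces $x, y \in \Pants_n$ with $d_{\Pants_n}(x, y) > N + 2D'$, and coarse surjectivity gives $x', y' \in \Pants_n^+$ within $D'$ of $x, y$. The triangle inequality in $\Pants_n$ yields $d_{\Pants_n}(x', y') > N$, and since distances in $\Pants_n^+$ are at least those in $\Pants_n$, we conclude $d_{\Pants_n^+}(x', y') > N$. As $N$ was arbitrary, $\Pants_n^+$ is unbounded.

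For $n = 2$, only $\Pants_2^+$ is defined, and Theorem \ref{thm:rank_2_hyperbolicity} identifies it up to quasi--isometry with the Farey graph, which is a well-known unbounded graph (it has infinite diameter, as, for instance, the curve graph of the once--holed torus is known to be hyperbolic of infinite diameter). Quasi--isometries preserve boundedness, so $\Pants_2^+$ is unbounded in this case as well.

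I expect essentially no real obstacle: the statement is a compilation of earlier results, and the only mild subtlety is the purely formal passage from coarse surjectivity plus unboundedness of the target to unboundedness of the source (and the symmetric statement for the inclusion $\Pants_n^+ \hookrightarrow \Pants_n$). The proof is therefore a short chain of implications rather than a new argument.
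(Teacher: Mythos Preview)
Your proposal is correct and follows essentially the same route as the paper, which records no detailed proof but simply invokes the unboundedness of $\FS_n$ together with the coarsely Lipschitz, coarsely surjective map $\Psi_n$ (and, for $n=2$, Theorem~\ref{thm:rank_2_hyperbolicity}). One small referencing slip: the result $\Pants_n^+ \succ \Pants_n$ is proved at the end of Section~3, not in the ``immediately preceding theorem''; alternatively, you could bypass the inclusion argument entirely by citing the preceding corollary, which already exhibits unbounded $\Out(F_n)$--orbits in both $\Pants_n$ and $\Pants_n^+$.
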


\section{Pants Graphs and Cayley Graphs}

Fix $n \geq 3$.  
It is well-known that $\Out(F_n)$ is finitely generated.  
Thus, we can define a word metric on $\Out(F_n)$, which turns $\Out(F_n)$ into a metric space.  
Since any two word metrics on a finitely generated group are quasi-isometric, this implies that the coarse Lipschitz property of a map from $\Out(F_n)$ to a metric space does not depend on the specific word metric chosen, but rather on $\Out(F_n)$ itself.  
In this section, we show that the orbit map of $\Out(F_n)$ on the orientable pants graph is coarsely Lipschitz.



As mentioned above, the group $\Out(F_n)$ is finitely generated. 
This was first established by Nielsen \cite{MR1512188} in his work on automorphism groups of free groups, 
where he introduced four types of automorphisms that generate $\Aut(F_n)$.
Neumann \cite{MR1512806} showed that $\Aut(F_n)$ can be generated by two elements for all $n \geq 4$.
In the present context, we will use the fact that $\Out(F_n)$ is generated by transvections and inversions.

Let us recall the definition of transvections and inversion.
First define the rose $R_n$ as q quotient space $[0, n] / \{0, 1, \dots, n\}$ that can be seen as a graph with $n$ petals with the quotient map $q: [0, n] \to R_n$.
For each $i = 1, \dots, n$, the segment $q([i - 1, i])$ is referred to as the \emph{$i$-th petal of $R_n$}.  
Now, for distinct $i, j \in \{1, \dots, n\}$, we define two types of transvections:

\begin{itemize}
    \item The \emph{right transvection} $r_{ij}$ is the homotopy class of a map $R_{ij}: R_n \to R_n$, defined by:
    \[
    R_{ij}(t) = 
    \begin{cases} 
    t & \text{if}~ t \in R_n - q([i-1, i]), \\
    q(2q^{-1}(t) - i + 1) & \text{if}~ t \in q([i-1, i-\frac{1}{2}]), \\
    q(2q^{-1}(t) - 2i + j) & \text{if}~ t \in q([i-\frac{1}{2}, i]).
    \end{cases}
    \]

    \item The \emph{left transvection} $\ell_{ji}$ is the homotopy class of a map $L_{ji}: R_n \to R_n$, defined by:
    \[
    L_{ji}(t) = 
    \begin{cases} 
    t & \text{if}~ t \in R_n - q([i-1, i]), \\
    q(2q^{-1}(t) - 2i + j + 1) & \text{if}~ t \in q([i-1, i-\frac{1}{2}]), \\
    q(2q^{-1}(t) - i) & \text{if}~ t \in q([i-\frac{1}{2}, i]).
    \end{cases}
    \]
\end{itemize}
The \emph{inversion} $\iota_{i}$ is the homotopy class of a map $I_i: R_n \to R_n$, defined by:
\[
I_i(t) = 
\begin{cases} 
t & \text{if}~ t \in R_n - q([i-1, i]), \\
q(2i-1 - q^{-1}(t)) & \text{if}~ t \in q([i-1, i]).
\end{cases}
\]

\begin{lemma} \label{lem:transvection}
    For distinct $i, j \in \{1, \dots, n\}$, any element $t \in \{ r_{ij}, r_{ij}^{-1}, \ell_{ji}, \ell_{ji}^{-1} \}$ fixes some vertex of $\Pants^+_n$.
\end{lemma}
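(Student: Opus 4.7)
The plan is to realize each transvection as the outer automorphism induced by a Dehn twist that preserves a carefully chosen pants decomposition. By the definition of the $\Out(F_n)$--action on $\Pants_n^+$, the element $t$ fixes $[P_X,h_X,X]$ precisely when there exists a homeomorphism $g: X\to X$ with $g(P_X)=P_X$ and $g\circ h_X \simeq h_X\circ f^{-1}$ for some representative $f$ of $t$. The natural model is the orientable surface $X$ of genus one with $n-1$ boundary components, whose Euler characteristic is $1-n$ and whose fundamental group is free of rank $n$, so $X$ is homotopy equivalent to $R_n$.

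First I would fix the marking $h_X: R_n\to X$ so that $h_X(e_i)=\alpha$ and $h_X(e_j)=\beta$ are the two standard handle loops on the genus-one portion, meeting transversally in a single point, while the remaining $n-2$ petals are sent to pairwise disjoint simple loops encircling $n-2$ of the boundary components (the final boundary is left as the ``outer'' boundary determined by the surface relation). Next, set $P_X=\{\beta,\gamma_1,\dots,\gamma_{n-2}\}$, where $\gamma_1,\dots,\gamma_{n-2}$ form a pants decomposition of the $(n+1)$--holed sphere obtained by cutting $X$ along $\beta$ and are chosen disjoint from a thin annular neighborhood $N$ of $\beta$. The Dehn twist $\tau_\beta$ supported in $N$ then fixes each $\gamma_k$ pointwise and preserves $\beta$ setwise, so $\tau_\beta(P_X)=P_X$.

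The key verification is that $\tau_\beta^{\pm 1}$ induces $r_{ij}^{\pm 1}$ as an outer automorphism via $h_X$. Since $\alpha$ and $\beta$ meet transversally exactly once and $\beta$ is disjoint from $h_X(e_k)$ for every $k\notin\{i,j\}$, the Dehn twist formula gives $\tau_\beta(\alpha) \sim \alpha\beta$ as free homotopy classes, while the classes of $\beta$ and of each remaining $h_X(e_k)$ are fixed; careful basepoint bookkeeping (fixing a specific push-off of $\beta$ away from the basepoint) identifies the outer class as $r_{ij}$. Hence both $r_{ij}$ and $r_{ij}^{-1}$ stabilize $[P_X, h_X, X]$. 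For the left transvections, a direct computation in $\Aut(F_n)$ (using $\iota_i^{-1}=\iota_i$) yields the identity $\ell_{ji}=\iota_i\circ r_{ij}^{-1}\circ\iota_i$, so that $\ell_{ji}$ and $\ell_{ji}^{-1}$ stabilize the translated vertex $\iota_i.[P_X,h_X,X]\in\Pants_n^+$. The step I expect to be most delicate is the basepoint bookkeeping identifying the outer class of $\tau_\beta$: for $n\ge 3$ the outer classes of $r_{ij}$ and $\ell_{ji}$ are genuinely distinct in $\Out(F_n)$, so one must be explicit about the push-off convention for $\beta$ in order to single out the correct class.
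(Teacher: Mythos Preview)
Your approach is essentially the paper's: realize the transvection as a Dehn twist along the curve $\beta\simeq h_X(e_j)$ on the genus-one surface with $n-1$ boundary components, with $\beta$ sitting in the pants decomposition so that the twist fixes $[P_X,h_X,X]$.

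The one noteworthy difference is how the four elements $r_{ij}^{\pm1},\ell_{ji}^{\pm1}$ are covered. You pin down that $\tau_\beta$ induces $r_{ij}$ via basepoint bookkeeping and then transport the fixed vertex by $\iota_i$ using $\ell_{ji}=\iota_i r_{ij}^{-1}\iota_i$. The paper sidesteps the bookkeeping entirely: it observes only that the right-handed twist $T_\gamma$ induces \emph{some} $t\in\{r_{ij},r_{ij}^{-1},\ell_{ji},\ell_{ji}^{-1}\}$, and that varying the orientations of the $i$-th and $j$-th petals in the marking $h_X$ realizes each of the four in turn. Since precomposing $h_X$ with $I_i$ is exactly your conjugation by $\iota_i$, the two arguments are the same in content; the paper's phrasing just spares you the delicate identification you flagged as the hardest step.
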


\begin{proof}
    Let $X$ be an oriented $(n-1)$-holed torus.  
    Define a homotopy equivalence $h_X: R_n \to X$ as follows:  
    a regular neighborhood of the union of the images of the $i$-th and $j$-th petals forms a once-punctured torus $Y \subset X$.
    For each $\ell \in \{1, \dots, n\} \setminus \{i, j\}$, the image of the $\ell$-th petal is homotopic to a boundary curve of $X$.

    Let $\gamma$ be a simple closed curve homotopic to the $j$-th petal.  
    The right-handed Dehn twist $T_\gamma$ along $\gamma$ corresponds to $t \in \{ r_{ij}, r_{ij}^{-1}, \ell_{ji}, \ell_{ji}^{-1} \}$, where the specific transvection $t$ is determined by the orientation of the $i$-th and $j$-th petals.

    Let $P_X$ be a pants decomposition of $X$ that includes the curves $\gamma \cup \partial Y$.
    Since $T_\gamma$ is a Dehn twist along $\gamma$, it follows that $T_\gamma(P_X) = P_X$.  
    Moreover, since $(P_X, h_X, X) = (T_\gamma. P_X, h_X, X)$ is equivalent to $(P_X, h_X \circ t^{-1}, X)$, we conclude that $P = t.P$.  
    Therefore, the transvection $t$ fixes $P$.
\end{proof}

\begin{lemma} \label{lem:inversion}
    For each $i \in \{1, \dots, n\}$, the inversion $\iota_i$ preserves some $4$-cycle of $\Pants^+_n$.
\end{lemma}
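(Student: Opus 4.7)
The plan is to adapt the strategy of Lemma~\ref{lem:transvection} by exhibiting a marked surface together with an involution $f:X\to X$ realizing the action of $\iota_i$, and then constructing a $4$-cycle of pants decompositions permuted by $f$. Unlike the transvection case, one should not expect a single fixed vertex: a homeomorphism realizing $\iota_i$ must reverse the $i$-th petal while fixing the others, which is typically incompatible with fixing a pants decomposition pointwise, so I aim instead for a $4$-cycle preserved setwise.

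I would take $X$ to be an oriented $(n-1)$-holed torus and choose a marking $h_X:R_n\to X$ so that a regular neighborhood of the union of the images of the $i$-th and $j$-th petals (for some $j\neq i$) forms an embedded once-holed torus $Y\subset X$, with these two petals representing generators $a,b$ of $\pi_1(Y)\cong F_2$; the remaining petals are sent to boundary components of $X$. Since the mapping class group of a once-holed torus surjects onto $\Out(F_2)\cong GL(2,\mathbb{Z})$, the outer automorphism $a\mapsto a^{-1}$, $b\mapsto b$ is induced by an orientation-reversing involution $\phi:Y\to Y$, which I would then extend to an involution $f:X\to X$ fixing $\partial Y$ setwise and preserving a chosen pants decomposition $P_0$ of $X\setminus Y$ containing $\partial Y$; such an extension exists because $X\setminus Y$ is planar and can be arranged symmetrically with respect to the $f$-axis.

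On the Farey graph of $Y$, the slope inversion $p/q\mapsto -p/q$ induced by $\phi$ fixes the curves $\gamma_a$ and $\gamma_b$ (slopes $\infty$ and $0$) and swaps $\gamma_{ab}\leftrightarrow\gamma_{ab^{-1}}$ (slopes $1$ and $-1$), and these four curves form a $4$-cycle in the Farey graph of $Y$. I would lift them to candidate vertices $V_s:=[\{\partial Y,\gamma_s\}\cup P_0,h_X,X]\in\Pants_n^+$ for $s\in\{a,ab,b,ab^{-1}\}$. By construction, $f$ permutes $\{V_a,V_{ab},V_b,V_{ab^{-1}}\}$ in the pattern $V_a\mapsto V_a$, $V_b\mapsto V_b$, $V_{ab}\leftrightarrow V_{ab^{-1}}$, so once the $4$-cycle is established, the induced action of $\iota_i$ preserves it.

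The main obstacle is verifying that these four vertices actually form a $4$-cycle in $\Pants_n^+$. Two Farey-adjacent curves on $Y$ intersect once and have a once-holed torus as their regular neighborhood in $X$, matching neither the four-holed sphere required by Elementary move I nor the twice-holed torus required by Elementary move II; thus no direct edge is available between consecutive lifted vertices. To resolve this, I would enlarge $Y$ to a twice-holed torus $T\subset X$ bounded by a separating curve of $P_0$, apply Elementary move II to deform $T$ into a four-holed sphere $T'\subset X'$ in an auxiliary surface $X'$ homotopy equivalent to $R_n$, and realize each of the four required edges by a single Elementary move I inside $T'$. The resulting $4$-cycle would alternate between vertices of $X$-type and $X'$-type, and the involution $f$ extends to this enriched configuration via the $\Out(F_n)$-equivariance of the construction, yielding the cycle preserved setwise by $\iota_i$.
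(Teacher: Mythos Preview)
Your approach has a genuine gap at the realization step. You build an orientation-reversing involution $f:X\to X$ by first taking $\phi:Y\to Y$ with $a\mapsto a^{-1}$, $b\mapsto b$ on the once-holed torus $Y$, and then extending over the planar complement $X\setminus Y$ by a reflection. But $\phi$ reverses the orientation of $\partial Y$, so any continuous extension over $X\setminus Y$ is orientation-reversing there as well, and hence reverses the orientation of every boundary component of $X$. Since you sent each petal $\ell\notin\{i,j\}$ to a boundary curve of $X$, your $f$ inverts all of those petals too. Thus $f$ realizes $\prod_{\ell\neq j}\iota_\ell$, not $\iota_i$, and for $n\ge 3$ these are different elements of $\Out(F_n)$. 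In fact no homeomorphism of your marked $(n-1)$-holed torus can realize $\iota_i$: an orientation-preserving one fixes all boundary orientations but then cannot induce $a\mapsto a^{-1}$, $b\mapsto b$ on $Y$ (determinant $-1$ in $GL(2,\mathbb Z)$), while an orientation-reversing one inverts all the boundary-parallel petals. So the asserted permutation $V_a\mapsto V_a$, $V_b\mapsto V_b$, $V_{ab}\leftrightarrow V_{ab^{-1}}$ under $\iota_i$ is unjustified, and the whole scheme collapses.

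The paper avoids this obstruction entirely: it works on an $n$-holed disk and does \emph{not} attempt to realize $\iota_i$ by a surface homeomorphism. Instead it exhibits a pants decomposition $P$ with $d_{\Pants^+}(P,\iota_i.P)\le 2$, the length-two path passing through a vertex $Q$ on a surface of higher genus via two Elementary moves~II (the point being that the intermediate move~II absorbs the orientation reversal of the $i$-th petal). Since $\iota_i^2=\mathrm{id}$, applying $\iota_i$ to this path gives a second length-two path $\iota_i.P\to\iota_i.Q\to P$, and $P,Q,\iota_i.P,\iota_i.Q$ is the desired $4$-cycle. Your secondary difficulty---that Farey-adjacent curves on $Y$ do not give edges in $\Pants_n^+$---is real, and your proposed detour through $X'$ via move~II does not yield a $4$-cycle either (each ``edge'' becomes a length-three path); but this is moot given the failure above.
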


\begin{proof}
    Let $X$ be an $n$-holed disk.  
    Define a topological embedding $h_X: R_n \to X$ such that the image $h_X(R_n)$ is a deformation retract of $X$.  
    Let $\beta$ be the boundary curve of $X$ that is not isotopic to any petal of $R_n$.  
    Additionally, let $\beta_i$ be the boundary curve isotopic to the $i$-th petal.

    Now, choose an arc $\alpha$ that joins $\beta$ and $\beta_i$ and intersects $h_X(R_n)$ only along the $i$-th petal.  
    Let $\gamma$ be a boundary of a regular neighborhood of $\beta \cup \alpha \cup \beta_i$.  
    Next, choose another boundary curve $\beta'$ and let $\delta$ be a simple closed curve such that $\gamma \cup \beta' \cup \delta$ forms a pair of pants.  

    Let $P_X$ be a pants decomposition of $X$ that includes the curves $\gamma \cup \delta$, and define the corresponding pants decomposition $P := [P_X, h_X, X]$.

    \begin{figure}[ht]
        \centering
        \begin{tikzcd}
    \begin{tikzpicture}[thick]
        \draw ({-2*sqrt(3)/2+cos(-60)},{1+sin(-60)}) arc (-60:0:1);
        \draw ({2*sqrt(3)/2+cos(180)},{1+sin(180)}) arc (180:300:1);
        \draw ({cos(-20)},{-2+sin(-20)}) arc (-20:120:1);
        \draw ({2*sqrt(3)+cos(120)},{-2+sin(120)}) arc (120:{180+20}:1);

        \draw[very thick, red, -{Stealth[length=8]}] ({-(sqrt(3)-0.98)},0.8) arc (-180:-80:{sqrt(3)-0.98} and 0.2);
        \draw[very thick, red] (0,0.6) arc (-90:0:{sqrt(3)-0.98} and 0.2);
        \draw[very thick, dashed, red] ({sqrt(3)-0.98},0.8) arc (0:180:{sqrt(3)-0.98} and 0.2);
        \draw (0,1) ellipse ({sqrt(3)-1} and 0.2);
        \draw[rotate around={120:({cos(210)},{sin(210)})}] ({cos(210)},{sin(210)}) ellipse ({sqrt(3)-1} and 0.2);
        
        \draw[rotate around={240:({cos(330)},{sin(330)})}] ({cos(330)+cos(0)*(sqrt(3)-1)},{sin(330)+sin(0)*(sqrt(3)-1)}) arc (0:180:{sqrt(3)-1} and 0.2);
        \draw[dashed, rotate around={240:({cos(330)},{sin(330)})}] ({cos(330)+cos(180)*(sqrt(3)-1)},{sin(330)+sin(180)*(sqrt(3)-1)}) arc (180:360:{sqrt(3)-1} and 0.2);
        
        \draw[rotate around={-60:({sqrt(3)+cos(30)},{-1+sin(30)})}] ({sqrt(3)+cos(30)},{-1+sin(30)}) ellipse ({sqrt(3)-1} and 0.2);
        
        \draw[dashed] ({sqrt(3)+cos(270)+cos(0)*(sqrt(3)-1)},{-1+sin(270)+sin(0)*(sqrt(3)-1)}) arc (0:180:{sqrt(3)-1} and 0.2);
        \draw ({sqrt(3)+cos(270)+cos(180)*(sqrt(3)-1)},{-1+sin(270)+sin(180)*(sqrt(3)-1)}) arc (180:360:{sqrt(3)-1} and 0.2);
    \end{tikzpicture}
    \ar[rr,leftrightarrow] \ar[dd,leftrightarrow]
    &&
    \begin{tikzpicture}[thick]
        \draw (0,0) circle (0.5);
        \draw[very thick, red, -{Stealth[length=8]}] (1,0) arc (0:100:1);
        \draw[very thick, red] (0,1) arc (90:360:1);
        
        \draw ({sqrt(3)-1+(0.5+(sqrt(3)-1))*cos(-atan(2/(2/((sqrt(3)-1)+0.5)-((sqrt(3)-1)+0.5)/2)))},{(0.5+(sqrt(3)-1))*sin(-atan(2/(2/((sqrt(3)-1)+0.5)-((sqrt(3)-1)+0.5)/2)))}) arc ({-atan(2/(2/((sqrt(3)-1)+0.5)-((sqrt(3)-1)+0.5)/2))}:{atan(2/(2/((sqrt(3)-1)+0.5)-((sqrt(3)-1)+0.5)/2))}:{0.5+(sqrt(3)-1)});
        \draw ({-(sqrt(3)-1)+(0.5+(sqrt(3)-1))*cos(180-atan(2/(2/((sqrt(3)-1)+0.5)-((sqrt(3)-1)+0.5)/2)))},{(0.5+(sqrt(3)-1))*sin(180-atan(2/(2/((sqrt(3)-1)+0.5)-((sqrt(3)-1)+0.5)/2)))}) arc ({180-atan(2/(2/((sqrt(3)-1)+0.5)-((sqrt(3)-1)+0.5)/2))}:{180+atan(2/(2/((sqrt(3)-1)+0.5)-((sqrt(3)-1)+0.5)/2))}:{0.5+(sqrt(3)-1)});

        \draw ({(sqrt(3)-1)+2/((sqrt(3)-1)+0.5)-((sqrt(3)-1)+0.5)/2+(2/((sqrt(3)-1)+0.5)-((sqrt(3)-1)+0.5)/2)*cos(180-atan(2/(2/((sqrt(3)-1)+0.5)-((sqrt(3)-1)+0.5)/2)))},{-2+(2/((sqrt(3)-1)+0.5)-((sqrt(3)-1)+0.5)/2)*sin(180-atan(2/(2/((sqrt(3)-1)+0.5)-((sqrt(3)-1)+0.5)/2)))}) arc ({180-atan(2/(2/((sqrt(3)-1)+0.5)-((sqrt(3)-1)+0.5)/2))}:200:{2/((sqrt(3)-1)+0.5)-((sqrt(3)-1)+0.5)/2});
        \draw ({-((sqrt(3)-1)+2/((sqrt(3)-1)+0.5)-((sqrt(3)-1)+0.5)/2)+(2/((sqrt(3)-1)+0.5)-((sqrt(3)-1)+0.5)/2)*cos(-20)},{-2+(2/((sqrt(3)-1)+0.5)-((sqrt(3)-1)+0.5)/2)*sin(-20)}) arc (-20:{atan(2/(2/((sqrt(3)-1)+0.5)-((sqrt(3)-1)+0.5)/2))}:{2/((sqrt(3)-1)+0.5)-((sqrt(3)-1)+0.5)/2});
        \draw ({(sqrt(3)-1)+2/((sqrt(3)-1)+0.5)-((sqrt(3)-1)+0.5)/2+(2/((sqrt(3)-1)+0.5)-((sqrt(3)-1)+0.5)/2)*cos(180)},{2+(2/((sqrt(3)-1)+0.5)-((sqrt(3)-1)+0.5)/2)*sin(180)}) arc (180:{180+atan(2/(2/((sqrt(3)-1)+0.5)-((sqrt(3)-1)+0.5)/2))}:{2/((sqrt(3)-1)+0.5)-((sqrt(3)-1)+0.5)/2});
        \draw ({-((sqrt(3)-1)+2/((sqrt(3)-1)+0.5)-((sqrt(3)-1)+0.5)/2)+(2/((sqrt(3)-1)+0.5)-((sqrt(3)-1)+0.5)/2)*cos(0)},{2+(2/((sqrt(3)-1)+0.5)-((sqrt(3)-1)+0.5)/2)*sin(0)}) arc (0:{-atan(2/(2/((sqrt(3)-1)+0.5)-((sqrt(3)-1)+0.5)/2))}:{2/((sqrt(3)-1)+0.5)-((sqrt(3)-1)+0.5)/2});

        \draw ({sqrt(3)-1},-2) arc (0:-180:{sqrt(3)-1} and 0.2);
        \draw[dashed] ({sqrt(3)-1},-2) arc (0:180:{sqrt(3)-1} and 0.2);
        \draw ({0.5+2*(sqrt(3)-1)},0) arc (0:-180:{sqrt(3)-1} and 0.2);
        \draw[dashed] ({0.5+2*(sqrt(3)-1)},0) arc (0:180:{sqrt(3)-1} and 0.2);
        \draw (-0.5,0) arc (0:-180:{sqrt(3)-1} and 0.2);
        \draw[dashed] (-0.5,0) arc (0:180:{sqrt(3)-1} and 0.2);
        \draw (0,2) ellipse ({sqrt(3)-1} and 0.2);
    \end{tikzpicture}
    \ar[dd,leftrightarrow]
    \\
    &
    \begin{tikzpicture}[thick]            
        \draw ({2*sqrt(3)/2+cos(240)},{1+sin(240)}) arc (240:300:1);
        \draw ({cos(-20)},{-2+sin(-20)}) arc (-20:60:1);
        \draw ({2*sqrt(3)+cos(120)},{-2+sin(120)}) arc (120:{180+20}:1);

        \draw[rotate around={240:({cos(330)},{sin(330)})}] ({cos(330)+cos(0)*(sqrt(3)-1)},{sin(330)+sin(0)*(sqrt(3)-1)}) arc (0:180:{sqrt(3)-1} and 0.2);
        \draw[rotate around={240:({cos(330)},{sin(330)})}] ({cos(330)+cos(180)*(sqrt(3)-1)},{sin(330)+sin(180)*(sqrt(3)-1)}) arc (180:360:{sqrt(3)-1} and 0.2);
        
        \draw[rotate around={-60:({sqrt(3)+cos(30)},{-1+sin(30)})}] ({sqrt(3)+cos(30)},{-1+sin(30)}) ellipse ({sqrt(3)-1} and 0.2);
        
        \draw[dashed] ({sqrt(3)+cos(270)+cos(0)*(sqrt(3)-1)},{-1+sin(270)+sin(0)*(sqrt(3)-1)}) arc (0:180:{sqrt(3)-1} and 0.2);
        \draw ({sqrt(3)+cos(270)+cos(180)*(sqrt(3)-1)},{-1+sin(270)+sin(180)*(sqrt(3)-1)}) arc (180:360:{sqrt(3)-1} and 0.2);

        \draw[very thick, red, rotate around={60:({cos(330)},{sin(330)})}] ({cos(330)+cos(0)*(sqrt(3)-1)+0.5},{sin(330)+sin(0)*(sqrt(3)-1)}) circle (0.5);
    \end{tikzpicture}
    \ar[dr,leftrightarrow,dashed] \ar[ur,leftrightarrow,dashed] \ar[dl,leftrightarrow,dashed] \ar[ul,leftrightarrow,dashed]
    &
    \\
    \begin{tikzpicture}[thick]
        \draw (0,0) circle (0.5);
        \draw[very thick, red, -{Stealth[length=8]}] (1,0) arc (0:-280:1);
        \draw[very thick, red] (0,1) arc (-270:-360:1);
        
        \draw ({sqrt(3)-1+(0.5+(sqrt(3)-1))*cos(-atan(2/(2/((sqrt(3)-1)+0.5)-((sqrt(3)-1)+0.5)/2)))},{(0.5+(sqrt(3)-1))*sin(-atan(2/(2/((sqrt(3)-1)+0.5)-((sqrt(3)-1)+0.5)/2)))}) arc ({-atan(2/(2/((sqrt(3)-1)+0.5)-((sqrt(3)-1)+0.5)/2))}:{atan(2/(2/((sqrt(3)-1)+0.5)-((sqrt(3)-1)+0.5)/2))}:{0.5+(sqrt(3)-1)});
        \draw ({-(sqrt(3)-1)+(0.5+(sqrt(3)-1))*cos(180-atan(2/(2/((sqrt(3)-1)+0.5)-((sqrt(3)-1)+0.5)/2)))},{(0.5+(sqrt(3)-1))*sin(180-atan(2/(2/((sqrt(3)-1)+0.5)-((sqrt(3)-1)+0.5)/2)))}) arc ({180-atan(2/(2/((sqrt(3)-1)+0.5)-((sqrt(3)-1)+0.5)/2))}:{180+atan(2/(2/((sqrt(3)-1)+0.5)-((sqrt(3)-1)+0.5)/2))}:{0.5+(sqrt(3)-1)});

        \draw ({(sqrt(3)-1)+2/((sqrt(3)-1)+0.5)-((sqrt(3)-1)+0.5)/2+(2/((sqrt(3)-1)+0.5)-((sqrt(3)-1)+0.5)/2)*cos(180-atan(2/(2/((sqrt(3)-1)+0.5)-((sqrt(3)-1)+0.5)/2)))},{-2+(2/((sqrt(3)-1)+0.5)-((sqrt(3)-1)+0.5)/2)*sin(180-atan(2/(2/((sqrt(3)-1)+0.5)-((sqrt(3)-1)+0.5)/2)))}) arc ({180-atan(2/(2/((sqrt(3)-1)+0.5)-((sqrt(3)-1)+0.5)/2))}:200:{2/((sqrt(3)-1)+0.5)-((sqrt(3)-1)+0.5)/2});
        \draw ({-((sqrt(3)-1)+2/((sqrt(3)-1)+0.5)-((sqrt(3)-1)+0.5)/2)+(2/((sqrt(3)-1)+0.5)-((sqrt(3)-1)+0.5)/2)*cos(-20)},{-2+(2/((sqrt(3)-1)+0.5)-((sqrt(3)-1)+0.5)/2)*sin(-20)}) arc (-20:{atan(2/(2/((sqrt(3)-1)+0.5)-((sqrt(3)-1)+0.5)/2))}:{2/((sqrt(3)-1)+0.5)-((sqrt(3)-1)+0.5)/2});
        \draw ({(sqrt(3)-1)+2/((sqrt(3)-1)+0.5)-((sqrt(3)-1)+0.5)/2+(2/((sqrt(3)-1)+0.5)-((sqrt(3)-1)+0.5)/2)*cos(180)},{2+(2/((sqrt(3)-1)+0.5)-((sqrt(3)-1)+0.5)/2)*sin(180)}) arc (180:{180+atan(2/(2/((sqrt(3)-1)+0.5)-((sqrt(3)-1)+0.5)/2))}:{2/((sqrt(3)-1)+0.5)-((sqrt(3)-1)+0.5)/2});
        \draw ({-((sqrt(3)-1)+2/((sqrt(3)-1)+0.5)-((sqrt(3)-1)+0.5)/2)+(2/((sqrt(3)-1)+0.5)-((sqrt(3)-1)+0.5)/2)*cos(0)},{2+(2/((sqrt(3)-1)+0.5)-((sqrt(3)-1)+0.5)/2)*sin(0)}) arc (0:{-atan(2/(2/((sqrt(3)-1)+0.5)-((sqrt(3)-1)+0.5)/2))}:{2/((sqrt(3)-1)+0.5)-((sqrt(3)-1)+0.5)/2});

        \draw ({sqrt(3)-1},-2) arc (0:-180:{sqrt(3)-1} and 0.2);
        \draw[dashed] ({sqrt(3)-1},-2) arc (0:180:{sqrt(3)-1} and 0.2);
        \draw ({0.5+2*(sqrt(3)-1)},0) arc (0:-180:{sqrt(3)-1} and 0.2);
        \draw[dashed] ({0.5+2*(sqrt(3)-1)},0) arc (0:180:{sqrt(3)-1} and 0.2);
        \draw (-0.5,0) arc (0:-180:{sqrt(3)-1} and 0.2);
        \draw[dashed] (-0.5,0) arc (0:180:{sqrt(3)-1} and 0.2);
        \draw (0,2) ellipse ({sqrt(3)-1} and 0.2);
    \end{tikzpicture}
    \ar[rr,leftrightarrow]
    &&
    \begin{tikzpicture}[thick]
        \draw ({-2*sqrt(3)/2+cos(-60)},{1+sin(-60)}) arc (-60:0:1);
        \draw ({2*sqrt(3)/2+cos(180)},{1+sin(180)}) arc (180:300:1);
        \draw ({cos(-20)},{-2+sin(-20)}) arc (-20:120:1);
        \draw ({2*sqrt(3)+cos(120)},{-2+sin(120)}) arc (120:{180+20}:1);

        \draw[very thick, red] ({-(sqrt(3)-0.98)},0.8) arc (-180:-90:{sqrt(3)-0.98} and 0.2);
        \draw[very thick, red, -{Stealth[length=8]}] ({sqrt(3)-0.98},0.8) arc (0:-100:{sqrt(3)-0.98} and 0.2);
        \draw[very thick, dashed, red] ({sqrt(3)-0.98},0.8) arc (0:180:{sqrt(3)-0.98} and 0.2);
        \draw (0,1) ellipse ({sqrt(3)-1} and 0.2);
        \draw[rotate around={120:({cos(210)},{sin(210)})}] ({cos(210)},{sin(210)}) ellipse ({sqrt(3)-1} and 0.2);
        
        \draw[rotate around={240:({cos(330)},{sin(330)})}] ({cos(330)+cos(0)*(sqrt(3)-1)},{sin(330)+sin(0)*(sqrt(3)-1)}) arc (0:180:{sqrt(3)-1} and 0.2);
        \draw[dashed, rotate around={240:({cos(330)},{sin(330)})}] ({cos(330)+cos(180)*(sqrt(3)-1)},{sin(330)+sin(180)*(sqrt(3)-1)}) arc (180:360:{sqrt(3)-1} and 0.2);
        
        \draw[rotate around={-60:({sqrt(3)+cos(30)},{-1+sin(30)})}] ({sqrt(3)+cos(30)},{-1+sin(30)}) ellipse ({sqrt(3)-1} and 0.2);
        
        \draw[dashed] ({sqrt(3)+cos(270)+cos(0)*(sqrt(3)-1)},{-1+sin(270)+sin(0)*(sqrt(3)-1)}) arc (0:180:{sqrt(3)-1} and 0.2);
        \draw ({sqrt(3)+cos(270)+cos(180)*(sqrt(3)-1)},{-1+sin(270)+sin(180)*(sqrt(3)-1)}) arc (180:360:{sqrt(3)-1} and 0.2);
    \end{tikzpicture}
\end{tikzcd}
        \caption{A cycle preserved by $\iota_i$} 
        \label{fig:cycle_for_inversion}
    \end{figure}

    The pants decomposition $P$ and its image under the inversion, $\iota_i.P = [P_X, h_X \circ \iota_i^{-1}, X)$, are joined by a path of length two, as shown in Figure \ref{fig:cycle_for_inversion}.
    Precisely, $P$ and $\iota_i.P$ are homotopic to the common wedge of a circle and a surface as in the middle of Figure \ref{fig:cycle_for_inversion}.
    This implies that $P$ and $\iota_i.P$ are incident to a common vertex $Q$.
    
    Furthermore, since $\iota^2$ is the identity, $\iota_i.Q$ also lies between $P$ and $\iota_i.P$.
    Hence, $\iota_i$ preserves the cycle $P, Q, \iota_i.P, \iota_i.Q$.
\end{proof}

Based on the proofs of Lemma \ref{lem:transvection} and Lemma \ref{lem:inversion}, we can explicitly identify a cycle in the pants graph $\Pants^+_n$ that is preserved by either a transvection or an inversion. Let $S_n$ denote the set of all transvections and inversions. This set $S_n$ forms a finite generating set for the outer automorphism group $\Out(F_n)$.



\begin{lemma} \label{lem:connected_S}
    There exists a bounded connected subgraph $\mathcal{S}$ of $\Pants^+_n$ such that $\phi.\mathcal{S}$ and $\mathcal{S}$ intersect for each $\phi \in S_n$.
\end{lemma}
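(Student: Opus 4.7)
The plan is to assemble $\mathcal{S}$ from the fixed subgraphs provided by Lemmas~\ref{lem:transvection} and~\ref{lem:inversion}, joined by bounded-length paths. First, for each generator $s \in S_n$, let $K_s \subset \Pants_n^+$ denote the $s$-invariant subgraph furnished by those lemmas: a single fixed vertex $P_s$ when $s$ is a transvection, and a preserved $4$-cycle $C_s$ when $s$ is an inversion. The finite union $K := \bigcup_{s \in S_n} K_s$ already satisfies $s.K \cap K \supset K_s \ne \emptyset$ for every $s \in S_n$, but it is generally disconnected, since different transvections fix vertices on different marked $(n-1)$-holed tori and inversions preserve cycles on marked $n$-holed disks.

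To connect the pieces I would exploit the following observation: each $K_s$ is realized on a marked surface of small complexity — an $(n-1)$-holed torus in the transvection case, an $n$-holed disk in the inversion case — and these two surface types are linked by a single Elementary move~II (collapsing the once-holed-torus subsurface). Using this, each $K_s$ can be transported to a single fixed basepoint $P_0 \in \Pants_n^+$ represented on a common marked $n$-holed disk $(h_0,X_0)$, via a bounded sequence of Elementary moves~I together with at most one move~II. Uniform boundedness of the resulting connecting paths follows from Remark~\ref{rem:surface_pants_graph}: the surface-to-rose map $H_X$ is $(A,B)$--coarsely Lipschitz with constants depending only on $n$, so bounded subsets of $\Pants(X)$ for the intermediate marked surfaces map to bounded subsets of $\Pants_n^+$.

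Define $\mathcal{S}$ to be the union of $K$ with all of these connecting paths. Since $S_n$ is finite and each $K_s$ is finite, $\mathcal{S}$ is a bounded subgraph of $\Pants_n^+$, connected by construction. The invariance condition is then immediate: for each $s \in S_n$ we have $K_s \subset \mathcal{S}$ and $s.K_s = K_s$, hence $K_s \subset \mathcal{S} \cap s.\mathcal{S}$, and in particular $\mathcal{S} \cap s.\mathcal{S} \ne \emptyset$.

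The main obstacle I anticipate is the middle step: producing the connecting paths requires a coherent choice of markings across the various $K_s$ and careful bookkeeping of the Elementary moves that link the torus-based constructions of Lemma~\ref{lem:transvection} to the disk-based constructions of Lemma~\ref{lem:inversion}. One has to verify that the explicit marked surfaces arising in the proofs of those two lemmas can indeed be connected in $\Pants_n^+$ without invoking connectedness of the ambient graph (which is not yet available at this stage of the paper). Once those explicit paths are laid down, the remainder of the proof is formal.
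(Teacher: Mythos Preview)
Your proposal is correct and follows the same overall strategy as the paper: use the invariant subgraphs from Lemmas~\ref{lem:transvection} and~\ref{lem:inversion}, and connect them into a bounded set using the coarse Lipschitz map $H_X$ of Remark~\ref{rem:surface_pants_graph}. The paper's execution is a bit cleaner, however, and in fact sidesteps the ``main obstacle'' you flag. Rather than taking the union of the $K_s$ and then building explicit cross-surface connecting paths, the paper works entirely on a single marked $n$-holed disk $(h_X,X)$: it picks pants decompositions $P_{ij}$ on $X$ containing curves $\gamma_{ij}$ encircling the $i$-th and $j$-th boundary components, uses the (classical) connectedness of the surface pants graph $\Pants(X)$ together with the coarse Lipschitz map $H_X$ to obtain a bounded connected $\mathcal{S}'\subset \Pants_n^+$ containing all the $P_{ij}$, and sets $\mathcal{S}$ equal to the $1$-neighborhood of $\mathcal{S}'$. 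The point is that the fixed vertex for each transvection (on the $(n-1)$-holed torus) and the preserved $4$-cycle for each inversion are already at distance at most one from some $P_{ij}$ via a single Elementary move~II, so they automatically lie in $\mathcal{S}$. This avoids any need to stitch together paths across different marked surfaces or to track markings carefully; your version works too, but the paper's $1$-neighborhood trick is the simpler bookkeeping.
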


\begin{proof}
    Let $X$ be an $n$-holed disk, and let $h_X: R_n \to X$ be a topological embedding which is also a homotopy equivalence.  
    Let $\beta_i$ denote the boundary curve of $X$ that is isotopic to the $i$-th petal, and let $\beta_0$ denote the boundary curve that is not isotopic to any petal.  

    For distinct $1 \leq i < j \leq n$, let us take a simple closed curve $\gamma_{ij}$ such that $\gamma_{ij}$ is disjoint from the $\ell$-th petal for all $\ell \in \{1, \dots, n\} \setminus \{i, j\}$, and such that $\gamma_{ij} \cup \beta_i \cup \beta_j$ forms a pair of pants.  
    Let $P_{ij}$ denote a pants decomposition of $X$ that contains the curve $\gamma_{ij}$.  
    Since $\Pants(X)$ is connected and the map $H_X: \Pants(X) \to \Pants_n^+$ defined in Remark \ref{rem:surface_pants_graph} is coarsely Lipschitz, there exists a bounded connected subgraph $\mathcal{S}'$ of $\Pants^+_n$ that contains all such pants decompositions $\{ P_{ij} \mid 1 \leq i < j \leq n \}$.

    Define $\mathcal{S}$ to be the $1$--neighborhood of $\mathcal{S}'$ in $\Pants^+_n$, which is also connected and bounded by definition.  
    From the proofs of Lemma \ref{lem:transvection} and Lemma \ref{lem:inversion}, we know that a transvection or inversion preserves some cycle in $\mathcal{S}$.  
    Hence, for each $\phi \in S_n$, we have that $\mathcal{S} \cap \phi.\mathcal{S}$ is nonempty, as $\phi$ fixes or preserves part of $\mathcal{S}$.
\end{proof}


As Remark \ref{rem:orientable_pants_quotients}, the graph $\Pants_n^+ / \Out(F_n)$ is finite and connected.
The quotient of $\Pants_n^+$ by $\Out(F_n)$ is the subgraphs of all pants decompositions of ``unmarked'' orientable surfaces whose fundamental group is isomorphic to $F_n$, which is finite.
Furthermore, this quotient is connected because Elementary move II generates all topological types of orientable surfaces and Proposition \ref{prop:surface_move} implies all pants decompositions of the fixed surface are contained in $\Pants_n^+$.


\begin{theorem} \label{thm:orbit_map}
    For every $n \geq 3$, any orbit map $\Out(F_n) \to \Pants^+_n$ is coarsely Lipschitz and coarsely surjective. In particular, one has $\mathcal{K}_n \succ \Pants_n^+$.
\end{theorem}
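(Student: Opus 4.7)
The plan is to run a \v{S}varc--Milnor-style argument: Lemma \ref{lem:connected_S} provides a bounded ``quasi-fundamental domain'' $\mathcal{S} \subset \Pants_n^+$ that each generator in the finite set $S_n \subset \Out(F_n)$ of transvections and inversions moves to an intersecting copy, and Remark \ref{rem:orientable_pants_quotients} states that $\Pants_n^+/\Out(F_n)$ is finite and connected. I would equip $\Out(F_n)$ with the word metric associated to $S_n$; since any two word metrics on a finitely generated group are quasi-isometric, the conclusion will not depend on this choice.

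For the coarse Lipschitz property, first fix $P_0 \in \mathcal{S}$ and set $D := \operatorname{diam}(\mathcal{S})$. Because $\Out(F_n)$ acts on $\Pants_n^+$ by isometries, each $s.\mathcal{S}$ again has diameter $D$, and Lemma \ref{lem:connected_S} furnishes a point in $\mathcal{S} \cap s.\mathcal{S}$, which yields $d(P_0, s.P_0) \le 2D$ for every $s \in S_n$. A telescoping sum along a minimal word $\phi = s_1 \cdots s_k$, together with equivariance of the action, then gives
\[
    d(P_0, \phi.P_0) \;\le\; \sum_{i=1}^{k} d(s_1 \cdots s_{i-1}.P_0,\, s_1 \cdots s_i.P_0) \;=\; \sum_{i=1}^{k} d(P_0, s_i.P_0) \;\le\; 2D \cdot |\phi|_{S_n},
\]
so the orbit map $\Out(F_n) \to \Pants_n^+$, $\phi \mapsto \phi.P_0$, is $(2D, 0)$--coarsely Lipschitz. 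Next, letting $D'$ denote the (finite) diameter of $\Pants_n^+/\Out(F_n)$, every vertex of $\Pants_n^+$ lies within $D'$ of $\Out(F_n).P_0$, giving $D'$--coarse surjectivity.

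To deduce $\mathcal{K}_n \succ \Pants_n^+$, I recall that the spine $\mathcal{K}_n$ of Outer space is a connected, locally finite simplicial complex admitting a properly discontinuous, cocompact, isometric $\Out(F_n)$-action \cite{MR830040}. The \v{S}varc--Milnor lemma then provides a quasi-isometry $\Out(F_n) \to \mathcal{K}_n$ via any orbit map, and composing a coarse inverse of this quasi-isometry with the orbit map $\Out(F_n) \to \Pants_n^+$ constructed above yields the required coarsely Lipschitz, coarsely surjective map $\mathcal{K}_n \to \Pants_n^+$. I do not anticipate a genuine obstacle: the substantive geometric input is already packaged in Lemma \ref{lem:connected_S} (a uniformly bounded set preserved, up to intersection, by each generator) and in the orientable case of Lemma \ref{lem:quotient} (connectedness of the quotient); the remainder is the standard \v{S}varc--Milnor transfer.
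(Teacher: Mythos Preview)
Your proposal is correct and follows essentially the same \v{S}varc--Milnor argument as the paper: fix $P_0 \in \mathcal{S}$, bound $d(P_0,s.P_0)$ for each generator via Lemma~\ref{lem:connected_S}, telescope along a minimal word, invoke Remark~\ref{rem:orientable_pants_quotients} for coarse surjectivity, and finish using the quasi-isometry between $\mathcal{K}_n$ and $\Out(F_n)$. The only discrepancy is cosmetic: the paper writes the per-generator bound as $D$ rather than your $2D$, but your estimate is the one actually justified by the intersection hypothesis $\mathcal{S}\cap s.\mathcal{S}\neq\emptyset$, and the constant is irrelevant for the coarse conclusion.
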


\begin{proof}
    Let $S_n$ be the set of transvections and inversions with a fixed free basis of $F_n$.
    By Lemma \ref{lem:connected_S}, there exists a bounded connected subgraph $\mathcal{S}$ of $\Pants_n^+$ such that $\phi.\mathcal{S}$ and $\mathcal{S}$ intersect for any transvection or inversion $\phi$.
    Choose $P_0 \in \mathcal{S}$, and let $D$ denote the diameter of $\mathcal{S}$.

    Choose $\psi \in \Out(F_n)$.
    If $\psi = \phi_1 \phi_2 \dots \phi_k$ is a shortest path from the identity, then we have
    \begin{align*}
        d_{\Pants^+}(P_0, \psi.P_0) &= d_{\Pants^+}(P_0, \phi_1 \dots \phi_k.P_0) \\
        &\leq d_{\Pants^+}(P_0, \phi_1.P_0) + \sum_{i = 1}^{k-1} d_{\Pants^+}(\phi_1 \dots \phi_i.P_0, \phi_1\dots\phi_{i+1}.P_0) \\
        &= \sum_{i = 1}^k d_{\Pants^+}(P_0, \phi_i.P_0) \leq kD =D \, d_S(\mathrm{id}, \psi)
    \end{align*}
    where $d_S$ is the word metric of $\Out(F_n)$ with respect to $S_n$.
    Therefore, the orbit map $\Out(F_n) \to \Pants_n^+$ is coarsely Lipschitz.

    Let \(C\) denote the diameter of the quotient graph \(\Pants_n^+ / \Out(F_n)\). 
    Then the \(C\)-neighborhood of the \(\Out(F_n)\)-orbit of \(\mathcal{S}\) covers all vertices of \(\Pants_n^+\). 
    In particular, the orbit map is \(C\)-coarsely surjective.  
    Since \(\mathcal{K}_n\) is quasi-isometric to \((\Out(F_n), d_S)\), it follows that there exists a coarsely Lipschitz and coarsely surjective map from \(\mathcal{K}_n\) to \(\Pants_n^+\). 
    Hence we obtain $\mathcal{K}_n \succ \Pants_n^+$.
\end{proof}

Note that \(\mathcal{K}_n\) is connected, as shown by Culler and Vogtmann \cite[Proposition~3.1.2]{MR830040}. 
Combining this with Theorem~\ref{thm:orbit_map}, we deduce that \(\Pants_n^+\) is connected. 
Moreover, the inclusion \(\Pants_n^+ \hookrightarrow \Pants_n\) is coarsely surjective. 
Hence, we arrive at the following result.

\begin{corollary} \label{cor:lip}
    For every $n \geq 3$, $\Pants_n$ and $\Pants^+_n$ is connected.
\end{corollary}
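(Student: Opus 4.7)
The plan is to assemble the three ingredients that the paragraph preceding the statement already lines up: the connectedness of the spine $\mathcal{K}_n$ due to Culler--Vogtmann, the relation $\mathcal{K}_n \succ \Pants_n^+$ established in Theorem~\ref{thm:orbit_map}, and the fact that the inclusion $\Pants_n^+ \hookrightarrow \Pants_n$ is coarsely surjective, which was recorded just above. The underlying principle is a soft but crucial observation: any coarsely Lipschitz map from a connected metric space into a graph equipped with the combinatorial metric must land inside a single connected component, because combinatorial distances between points lying in different components are infinite.

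First I would argue that $\Pants_n^+$ is connected. By Theorem~\ref{thm:orbit_map}, there exists a $(A,B)$--coarsely Lipschitz and $C$--coarsely surjective map $\varphi\colon \mathcal{K}_n \to \Pants_n^+$. Since $\mathcal{K}_n$ is connected, any two points $x,y \in \mathcal{K}_n$ satisfy $d_{\mathcal{K}_n}(x,y) < \infty$, hence $d_{\Pants_n^+}(\varphi(x),\varphi(y)) \leq A\,d_{\mathcal{K}_n}(x,y) + B < \infty$. Because $\Pants_n^+$ carries the combinatorial metric, this forces $\varphi(x)$ and $\varphi(y)$ to lie in the same connected component of $\Pants_n^+$; consequently $\varphi(\mathcal{K}_n)$ is contained in a single component $\mathcal{C}$. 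Coarse surjectivity then shows that every vertex of $\Pants_n^+$ is at distance at most $C$ from $\mathcal{C}$, so it also belongs to $\mathcal{C}$. Thus $\Pants_n^+ = \mathcal{C}$ is connected.

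Next I would deduce the connectedness of $\Pants_n$ from that of $\Pants_n^+$ using the coarsely surjective inclusion $\iota\colon \Pants_n^+ \hookrightarrow \Pants_n$ established earlier. Given any vertex $P \in \Pants_n$, coarse surjectivity provides a vertex $P' \in \Pants_n^+$ with $d_{\Pants_n}(P, \iota(P')) \leq C'$ for a uniform constant $C'$, so $P$ is in the same component of $\Pants_n$ as $\iota(P')$. Since $\Pants_n^+$ is connected as a subgraph of $\Pants_n$, all vertices $\iota(P')$ lie in a common component, and hence so does every $P \in \Pants_n$. This proves that $\Pants_n$ is connected.

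There is essentially no obstacle here, as the substantive work was carried out in Theorem~\ref{thm:orbit_map} and in the coarse surjectivity of $\Pants_n^+ \hookrightarrow \Pants_n$. The only point worth stating carefully is the principle that a coarsely Lipschitz map from a connected space into a combinatorial graph has image in a single connected component; once this is granted, the corollary follows in two short steps.
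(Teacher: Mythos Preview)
Your proposal is correct and follows exactly the approach the paper takes: use the connectedness of $\mathcal{K}_n$ together with the coarsely Lipschitz, coarsely surjective map of Theorem~\ref{thm:orbit_map} to get connectedness of $\Pants_n^+$, and then use the coarsely surjective inclusion $\Pants_n^+ \hookrightarrow \Pants_n$ to pass to $\Pants_n$. The only difference is that you spell out the (easy) principle about coarsely Lipschitz maps into combinatorial graphs landing in a single component, which the paper leaves implicit.
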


\bibliographystyle{amsalpha}
\bibliography{references}

\end{document}